\theoremstyle{plain}
\newtheorem{theorem}{Theorem}[section]
\newtheorem{lemma}[theorem]{Lemma}
\newtheorem{prop}[theorem]{Proposition}
\theoremstyle{definition}
\newtheorem{remark}[theorem]{Remark}
\newtheorem{example}[theorem]{Example}
\newtheorem{cor}[theorem]{Corollary}
\theoremstyle{remark}
\begin{document}

\title [Bounds for the Davis-Wielandt radius of bounded linear operators]{Bounds for the Davis-Wielandt radius of bounded linear operators} 

\author[Pintu Bhunia, Aniket Bhanja, Santanu Bag and Kallol Paul]{ Pintu Bhunia, Aniket Bhanja, Santanu Bag and Kallol Paul}

\address [Bhunia] {Department of Mathematics, Jadavpur University, Kolkata 700032, West Bengal, India}
\email{pintubhunia5206@gmail.com}

\address[Bhanja] {Department of Mathematics\\ Vivekananda College Thakurpukur\\ Kolkata\\ West Bengal\\India\\ }
\email{aniketbhanja219@gmail.com}

\address[Bag] {Department of Mathematics\\ Vivekananda  College for Women, Barisha \\ Kolkata  \\ West Bengal\\ INDIA}
\email{santanumath84@gmail.com}

\address[Paul] {Department of Mathematics, Jadavpur University, Kolkata 700032, West Bengal, India}
\email{kalloldada@gmail.com}

\thanks{Mr. Pintu Bhunia would like to thank UGC, Govt. of India for the financial support in the form of SRF. Prof. Kallol Paul would like to thank RUSA 2.0, Jadavpur University for  partial support. }


\subjclass[2010]{Primary 47A12, Secondary 47A30, 47A50.}
\keywords{Numerical radius, Davis-Wielandt radius, Hilbert space, operator matrix.}

\maketitle
\begin{abstract}
 We obtain  upper and lower bounds for the Davis-Wielandt radius of bounded linear  operators defined on a complex Hilbert space, which improve on the existing ones. We also obtain  bounds for the Davis-Wielandt radius of operator matrices. We determine the exact value of the Davis-Wielandt radius of two special type of operator matrices $\left(\begin{array}{cc}
 I & B\\
 0 & 0
 \end{array}\right)$ and $\left(\begin{array}{cc}
 0 & A\\
 B & 0
 \end{array}\right)$, where $A,B\in \mathcal{B}(\mathcal{H})$, $I$ and $0$ are the identity operator and the zero operator on $\mathcal{H},$ respectively. Finally we obtain bounds for the Davis-Wielandt radius of operator matrices of the form   $\left(\begin{array}{cc}
 A& B\\
 0 & C
 \end{array}\right),$ where $A,B, C\in \mathcal{B}(\mathcal{H}).$ 

\end{abstract}

\section{Introduction}

\smallskip

\noindent Let $\mathcal{H}$ be a complex Hilbert space with usual inner product $\langle.,.\rangle$ and $\mathcal{B}(\mathcal{H})$ be the $\mathbb{C}^*$-algebra of all bounded linear operators on $\mathcal{H}$. For $T\in \mathcal{B}(\mathcal{H})$, $T^*$ and $|T|$ denote the adjoint of $T$ and absolute value of $T$ (i.e., $|T|=(T^*T)^{\frac{1}{2}}$), respectively. The operator norm and the minimum norm of $T\in \mathcal{B}(\mathcal{H}),$ denoted by $\|T\|$ and $m(T)$, respectively, are defined as
$\|T\|=\sup\left\{\|Tx\|: x\in \mathcal{H}, \|x\|=1 \right \}$ and  $m(T)=\inf \left\{\|Tx\|: x\in \mathcal{H}, \|x\|=1 \right \}.$  For given $T\in \mathcal{B}(\mathcal{H})$, the numerical range of $T$, denoted as $W(T),$ is defined as the collection of all scalars $\langle Tx,x \rangle $ with $  \|x\|=1$, i.e., $ W(T)=\left\{\langle Tx,x\rangle : x\in \mathcal{H}, \|x\|=1 \right \}.$ The numerical radius and the Crawford number of $T,$ denoted as $w(T)$ and $c(T)$, respectively,  are defined as
\begin{eqnarray*}
	w(T)&=&\sup \left\{|\langle Tx,x\rangle| : x\in \mathcal{H}, \|x\|=1 \right \},\\
	c(T)&=&\inf \left\{|\langle Tx,x\rangle| : x\in \mathcal{H}, \|x\|=1 \right \}.
\end{eqnarray*}
The operator norm attainment set of $T,$ denoted as $M_T$, is defined as the set of all unit vectors in $\mathcal{H}$ at which $T$ attains its norm, i.e., $$ M_T =\left \{ x\in \mathcal{H} :  \|x\|=1, \|Tx\|=\|T\|   \right \}.$$ Likewise, 
the numerical radius attainment set and the Crawford number attainment set of $T,$  denoted as $W_T$ and $c_T$, respectively, are defined as
\begin{eqnarray*}
	W_T &=& \left \{ x\in \mathcal{H} :  \|x\|=1, |\langle Tx,x\rangle|=w(T)   \right \},\\
	c_T &=& \left \{ x\in \mathcal{H} :  \|x\|=1, |\langle Tx,x\rangle|=c(T)   \right \}. 
\end{eqnarray*}
It is well-known that the numerical radius $w(.)$ defines a norm on $\mathcal{B}(\mathcal{H})$, equivalent to the operator norm $\|.\|$ satisfying the following inequality. For $T\in \mathcal{B}(\mathcal{H}),$ 
$$\frac{1}{2}\|T\|\leq  w(T)\leq \|T\|. $$ 
The first inequality becomes equality if $T$ is nilpotent of index $2$ and second inequality becomes equality if $T$ is normal. The power inequality for the numerical radius, $w(T^{n})\leq w^n(T),~~~\forall~~ n=1,2,\ldots,$ is an important inequality in the study of numerical radius inequalities. Due to the importance of the numerical range and the numerical radius inequalities, these areas have attracted  many mathematicians over the years. We refer a few of the articles and books \cite{BBP,DB,GR,PB}  and the references therein for further readings.  The Davis-Wielandt radius of an operator is an important generalization of the numerical radius. The Davis-Wielandt shell $DW(T) $ and the Davis-Wielandt radius $dw(T)$ of $T\in \mathcal{B}(\mathcal{H})$ are defined as follows (see \cite{D,W}):
\begin{eqnarray*}
	DW(T)&=& \left \{ \left (\langle Tx,x\rangle, \|Tx\|^2 \right ) :  x\in \mathcal{H}, \|x\|=1   \right \} \subseteq \mathbb{C} \times \mathbb{R},\\
	dw(T)&=& \sup \left \{ \sqrt{|\langle Tx,x\rangle|^2+ \|Tx\|^4 } :  x\in \mathcal{H}, \|x\|=1   \right \}.
\end{eqnarray*}
It is easy to verify that the Davis-Wielandt radius $dw(.)$ cannot define a norm on $\mathcal{B}(\mathcal{H})$, although it satisfies the following inequality. For $T\in \mathcal{B}(\mathcal{H}),$ 
\begin{eqnarray}\label{1stbound}
\max \{ w(T), \|T\|^2 \} \leq dw(T)\leq \sqrt{w^2(T)+\|T\|^4}.
\end{eqnarray}
The inequalities in (\ref{1stbound}) are sharp, if we consider $T=\left(\begin{array}{cc}
1 & 0\\
0 & 1
\end{array}\right)$ then we have $dw(T)=\sqrt{w^2(T)+\|T\|^4} $ $=\sqrt{2}$ and if we consider $S=\left(\begin{array}{cc}
0 & 1\\
0 & 0
\end{array}\right)$ then we have $dw(S)=\max \{ w(S), \|S\|^2 \}=1.$ The second inequality in  (\ref{1stbound}) becomes equality if and only if $T$ is normaloid, i.e., $w(T)=\|T\|$ (see \cite[Cor. 3.2]{ZMCN}).  The Davis-Wielandt shell have been studied by many mathematicians that includes but not limited to Li and Poon \cite{LP}, Li et. al. \cite{LPS}, Lins et. al. \cite{LSZ}.   Recently, Zamani and Shebrawi \cite{ZS} and Zamani et. al. \cite{ZMCN} have also studied  the Davis-Wielandt radius of bounded linear operators.\\

\noindent In this paper, we begin with the study of equality of the lower bounds for Davis-Wielandt radius mentioned  in (\ref{1stbound}). We then obtain new upper and lower bounds for the Davis-Wielandt radius and show that the bounds obtained here improve on the existing ones. Further, we obtain an upper bound for the Davis-Wielandt radius of sum of two bounded linear operators, i.e., namely, 
\[dw(S+T)\leq dw(S)+dw(T)+w(S^*T+T^*S),\]
for $ S,T \in \mathcal{B}(\mathcal{H})$.
We also obtain upper and lower bounds for  the Davis-Wielandt radius of some operator matrices. The bounds for operator matrices can be used to obtain bounds for the Davis-Wielandt radius of some  bounded linear operators.  We  give numerical examples to show that bounds obtained for operator matrices are better than those obtained for bounded linear operators. We determine the exact value of the Davis-Wielandt radius of two special type of operator matrices $\left(\begin{array}{cc}
I & B\\
0 & 0
\end{array}\right)$ and $\left(\begin{array}{cc}
0 & A\\
B & 0
\end{array}\right)$, where $A,B\in \mathcal{B}(\mathcal{H})$, $I$ and $0$ are the identity operator and the zero operator on $\mathcal{H},$ respectively. Finally we obtain bounds for the Davis-Wielandt radius of operator matrices of the form   $\left(\begin{array}{cc}
A& B\\
0 & C
\end{array}\right),$ where $A,B, C\in \mathcal{B}(\mathcal{H}).$

\section{\textbf{Davis-Wielandt radius inequalities of operators}}

\smallskip
\noindent We begin this section with the following results on the equality of the lower bound for the Davis-Wielandt radius of bounded linear operators mentioned in (\ref{1stbound}). We skip the elementary details of the proofs.

\begin{prop}
Let $T\in \mathcal{B}({\mathcal{H}}).$ Then the following conditions are equivalent.\\
$(i)$ $dw(T)=w(T)$.\\
$(ii)$ $T=0$.
\end{prop}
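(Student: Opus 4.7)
The implication $(ii) \Rightarrow (i)$ is immediate: if $T=0$, then $\langle Tx,x\rangle = 0$ and $\|Tx\| = 0$ for every unit $x$, so both $dw(T)$ and $w(T)$ are $0$.

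For $(i) \Rightarrow (ii)$, my plan is to argue by contradiction, or equivalently, by extracting a sequence and using the defining supremum. Assume $dw(T)=w(T)$. From the definition of $dw(T)$, for every unit vector $x\in\mathcal{H}$,
\[
|\langle Tx,x\rangle|^{2}+\|Tx\|^{4}\le dw(T)^{2}=w(T)^{2}.
\]
Hence $\|Tx\|^{4}\le w(T)^{2}-|\langle Tx,x\rangle|^{2}$ for every unit $x$. Now choose a sequence of unit vectors $\{x_{n}\}$ with $|\langle Tx_{n},x_{n}\rangle|\to w(T)$, which exists by the definition of the supremum defining $w(T)$. Then the right-hand side tends to $0$, so $\|Tx_{n}\|\to 0$.

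The finishing step is the Cauchy--Schwarz observation $|\langle Tx_{n},x_{n}\rangle|\le \|Tx_{n}\|\to 0$, forcing $w(T)=0$. Since $w(\cdot)$ is a norm on $\mathcal{B}(\mathcal{H})$, this gives $T=0$.

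There is no real obstacle here beyond keeping track of the difference between attaining the supremum and approximating it (the supremum in the definition of $w(T)$ need not be attained on a general Hilbert space, which is why I take a sequence rather than a single maximising vector). The whole argument only uses the trivial estimate $|\langle Tx,x\rangle|^{2}+\|Tx\|^{4}\le dw(T)^{2}$ and the fact that $w(\cdot)$ is a norm, so this matches the authors' remark that the proof is elementary and can be omitted.
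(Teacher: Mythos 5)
Your argument is correct: the paper omits the proof as ``elementary,'' and your sequence argument --- combining $\|Tx_n\|^4\le w^2(T)-|\langle Tx_n,x_n\rangle|^2\to 0$ with the Cauchy--Schwarz bound $|\langle Tx_n,x_n\rangle|\le\|Tx_n\|$ to force $w(T)=0$, and then $T=0$ since $w(\cdot)$ is a norm --- is exactly the kind of routine verification the authors intend. Your care in using an approximating sequence rather than a maximizing vector is appropriate and introduces no gap.
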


\begin{prop}\label{th-equality2}
Let $T\in \mathcal{B}({\mathcal{H}})$ and let $dw(T)=\|T\|^2.$ Then one of the following conditions holds.\\
$(i)$ Let $M_T\neq \emptyset $. Then $|\langle Tx,x\rangle|=0$ if $x\in M_T $, i.e., $M_T \subseteq c_T.$\\
$(ii)$ Let $M_T= \emptyset $. Then there exists a sequence $\{x_n\}$ in $\mathcal{H}$ with $\|x_n\|=1$ such that $\lim_{n\to \infty}\|Tx_n\|=\|T\|$ and $\lim_{n\to \infty} |\langle Tx_n,x_n\rangle|=0.$ 
\end{prop}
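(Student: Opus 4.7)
The plan is to split into the two cases laid out in the statement — $M_T \neq \emptyset$ versus $M_T = \emptyset$ — and in each case extract the conclusion directly from the defining supremum inequality
\[
dw(T) \geq \sqrt{|\langle Tx,x\rangle|^2 + \|Tx\|^4} \qquad \text{for every unit vector } x.
\]

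For case (i), I would fix any $x \in M_T$, so that $\|Tx\| = \|T\|$. The displayed inequality above together with the hypothesis $dw(T) = \|T\|^2$ gives $\|T\|^4 \geq |\langle Tx,x\rangle|^2 + \|T\|^4$, which forces $|\langle Tx,x\rangle| = 0$. Since $c(T) \geq 0$ by definition and is now realized at $x$, one concludes $c(T) = 0$ and $x \in c_T$; as $x$ was arbitrary in $M_T$, this gives $M_T \subseteq c_T$.

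For case (ii), I would start from the norm-supremum definition to pick a sequence $\{x_n\}$ of unit vectors with $\|Tx_n\| \to \|T\|$. For this sequence, the same defining inequality combined with $dw(T) = \|T\|^2$ yields
\[
|\langle Tx_n, x_n\rangle|^2 \leq dw(T)^2 - \|Tx_n\|^4 = \|T\|^4 - \|Tx_n\|^4,
\]
and the right-hand side tends to $0$. Hence $|\langle Tx_n, x_n\rangle| \to 0$, producing the sequence required by (ii).

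I do not anticipate a genuine obstacle here; the argument is essentially a rearrangement of the hypothesis $dw(T) = \|T\|^2$ with the tautological lower bound. The only mildly delicate point worth stating explicitly is that in (i) the equality $|\langle Tx,x\rangle| = 0$ is what simultaneously identifies $c(T)$ as $0$ and places $x$ in $c_T$, so the inclusion $M_T \subseteq c_T$ is not circular but a genuine consequence of the hypothesis.
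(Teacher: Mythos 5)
Your argument is correct, and it is precisely the elementary argument the paper has in mind: the authors explicitly skip the proof as routine, and your two cases fill in exactly those omitted details by rearranging $dw(T)^2 \geq |\langle Tx,x\rangle|^2 + \|Tx\|^4$ against the hypothesis $dw(T)=\|T\|^2$. Nothing further is needed.
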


\begin{remark}
We note  that the converse part of Proposition \ref{th-equality2} may not be true. As for example, if we consider $T=\left(\begin{array}{ccc}
	\frac{3}{8}&0&0\\
	0 & 0&\frac{1}{2} \\
	0 & 0&0
	\end{array}\right)$ then we see that $|\langle Tx,x\rangle|=0$ for all $x\in M_T$, i.e., $ M_T\subseteq c_T$. But, $dw(T)\neq \|T\|^2=\frac{1}{4}$ as $dw(T)\geq w(T)=\frac{3}{8}.$
\end{remark}

\smallskip

 In our first theorem of this section, we obtain new lower bounds for the Davis-Wielandt radius of bounded linear operators.

\begin{theorem}\label{th-lower1}
Let $T\in \mathcal{B}({\mathcal{H}}).$ Then the following inequalities are hold.
\begin{eqnarray*}
(i)~ dw^2(T) & \geq & \max \left\{ w^2(T)+c^2(T^*T), \|T\|^4+c^2(T)\right\},\\
(ii)~dw^2(T) & \geq & 2\max \left\{w(T)c(T^*T), c(T)\|T\|^2 \right\}.
\end{eqnarray*}
\end{theorem}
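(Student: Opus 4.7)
The plan is to exploit the simple fact that $\|Tx\|^2 = \langle T^*Tx, x\rangle$, and since $T^*T$ is positive, $\inf_{\|x\|=1}\|Tx\|^2 = \inf_{\|x\|=1}\langle T^*Tx,x\rangle = c(T^*T)$. Armed with this, both inequalities reduce to elementary manipulations of the supremum definition of $dw(T)$.

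For part $(i)$, I would start from
\[
dw^2(T) = \sup_{\|x\|=1}\bigl(|\langle Tx,x\rangle|^2 + \|Tx\|^4\bigr).
\]
For every unit vector $x$, we have $\|Tx\|^4 \geq c^2(T^*T)$, so
\[
|\langle Tx,x\rangle|^2 + \|Tx\|^4 \;\geq\; |\langle Tx,x\rangle|^2 + c^2(T^*T).
\]
Taking the supremum over unit vectors on the right-hand side yields $w^2(T)+c^2(T^*T)$, and since the left-hand side is bounded above by $dw^2(T)$ for each $x$, we conclude $dw^2(T)\geq w^2(T)+c^2(T^*T)$. The symmetric bound is obtained by keeping $\|Tx\|^4$ free and using $|\langle Tx,x\rangle|^2 \geq c^2(T)$; taking $\sup_x\|Tx\|^4 = \|T\|^4$ then gives $dw^2(T)\geq \|T\|^4 + c^2(T)$.

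For part $(ii)$, the natural tool is the AM–GM inequality $a^2+b^2\geq 2ab$ applied to $a=|\langle Tx,x\rangle|$ and $b=\|Tx\|^2$, which gives
\[
|\langle Tx,x\rangle|^2 + \|Tx\|^4 \;\geq\; 2\,|\langle Tx,x\rangle|\,\|Tx\|^2.
\]
Next, for each unit vector $x$, I would replace one of the two factors on the right by an infimum bound to force one of them constant and then take the supremum over $x$: using $\|Tx\|^2 \geq c(T^*T)$ gives
\[
dw^2(T) \;\geq\; 2\, c(T^*T)\sup_{\|x\|=1}|\langle Tx,x\rangle| \;=\; 2\,w(T)\,c(T^*T),
\]
while using $|\langle Tx,x\rangle|\geq c(T)$ gives $dw^2(T)\geq 2\,c(T)\,\|T\|^2$. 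Taking the maximum of the two yields the claim.

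There is really no substantial obstacle here; the only thing worth being careful about is the direction of the $\sup$/$\inf$ interchange, namely the elementary observation that $\sup_x(f(x)+g(x))\geq \sup_x f(x)+\inf_x g(x)$ and, for nonnegative $f,g$, $\sup_x f(x)g(x)\geq (\sup_x f(x))(\inf_x g(x))$, both of which hold because each individual term $f(x)+g(x)$ (resp.\ $f(x)g(x)$) dominates $f(x)+\inf_y g(y)$ (resp.\ $f(x)\inf_y g(y)$) before taking the supremum in $x$. With that pointwise reduction in hand, the proof is a short and direct verification.
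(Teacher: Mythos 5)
Your proposal is correct and follows essentially the same route as the paper: the pointwise identity $\|Tx\|^2=\langle T^*Tx,x\rangle$, replacing one summand (or factor, after AM--GM in part (ii)) by its infimum $c(T^*T)$ or $c(T)$, and then taking the supremum over unit vectors. No gaps; the sup/inf justification you add at the end is exactly the (implicit) step the paper relies on.
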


\begin{proof}
$ (i) $ Let $x$ be a unit vector in $\mathcal{H}$. Then from the definition of $dw(T)$, we get
\begin{eqnarray*}
dw^2(T) &\geq & |\langle Tx,x \rangle |^2+\|Tx\|^4\\
 &= & |\langle Tx,x \rangle |^2+\langle T^*Tx,x\rangle ^2 \\
&\geq & |\langle Tx,x \rangle |^2 + c^2(T^*T).
\end{eqnarray*}
Therefore, taking supremum over all unit vectors in $\mathcal{H}$, we have
\[dw^2(T)\geq  w^2(T)+c^2(T^*T).\]

Again from $dw^2(T) \geq  |\langle Tx,x \rangle |^2+\|Tx\|^4,$ where $\|x\|=1$, we get
\[dw^2(T) \geq  c^2(T)+\|Tx\|^4.\]
Taking supremum over all unit vectors in $\mathcal{H}$, we have
\[dw^2(T) \geq  c^2(T)+\|T\|^4.\]
This completes the proof of $(i).$\\
$(ii)$  	For all  $x\in \mathcal{H}$ with $\|x\|=1,$ we have  $$|\langle Tx,x \rangle |^2+\|Tx\|^4\geq 2|\langle Tx,x \rangle | \|Tx\|^2$$ and   so 
$$ dw^2(T)\geq 2|\langle Tx,x \rangle | \langle T^*Tx,x \rangle \geq 2|\langle Tx,x \rangle | c(T^*T).$$

Taking supremum over all unit vectors in $\mathcal{H}$, we get 
\[dw^2(T)\geq 2w(T)c(T^*T).\]

Again from $|\langle Tx,x \rangle |^2+\|Tx\|^4\geq 2|\langle Tx,x \rangle | \|Tx\|^2$, we have
\[dw^2(T)\geq 2c(T) \|Tx\|^2.\]
Taking supremum over all unit vectors in $\mathcal{H}$, we get 
\[dw^2(T)\geq 2c(T)\|T\|^2.\]
This completes the proof.
\end{proof}

\begin{remark}\label{rem-1}
 1. It is clear that the inequality obtained in Theorem \ref{th-lower1} $(i) $ improves on  the first inequality in (\ref{1stbound}). \\
2. If $c(T) > \frac{\|T\|^2}{2}$ and $w(T)\leq \|T\|^2,$ then it is easy to see that the inequality in Theorem \ref{th-lower1} $(ii) $  is sharper than the first inequality in (\ref{1stbound}). 
\end{remark}

 In the following theorem, we obtain an upper bound for the Davis-Wielandt radius of bounded linear operators.

\begin{theorem}\label{th-upper2}
Let $T\in \mathcal{B}({\mathcal{H}}).$ Then 
\[dw^2(T)\leq \sup_{\theta \in \mathbb{R}}w^2(e^{{\rm i} \theta}T+T^*T)-2c(T)m^2(T).\]
\end{theorem}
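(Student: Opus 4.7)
The strategy is to rewrite $dw^2(T)$ using the elementary identity
\[
|a|^2 + b^2 = (|a|+b)^2 - 2b|a|
\]
with $a=\langle Tx,x\rangle \in \mathbb{C}$ and $b = \|Tx\|^2 \in \mathbb{R}_{\geq 0}$, and then promote $(|a|+b)^2$ to a supremum over $\theta$ of a quantity that is manifestly a numerical-radius expression for $e^{{\rm i}\theta}T + T^*T$.

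First, I would fix a unit vector $x\in\mathcal{H}$ and choose $\theta_x\in\mathbb{R}$ so that $e^{{\rm i}\theta_x}\langle Tx,x\rangle = |\langle Tx,x\rangle|$. Then
\[
(|\langle Tx,x\rangle| + \|Tx\|^2)^2 = |e^{{\rm i}\theta_x}\langle Tx,x\rangle + \|Tx\|^2|^2 = \bigl|\langle(e^{{\rm i}\theta_x}T + T^*T)x,x\rangle\bigr|^2,
\]
using $\|Tx\|^2 = \langle T^*Tx,x\rangle$ and the linearity of the inner product. This quantity is clearly bounded above by $w^2(e^{{\rm i}\theta_x}T+T^*T) \leq \sup_{\theta\in\mathbb{R}} w^2(e^{{\rm i}\theta}T+T^*T)$.

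Next, combining with the identity in the first step,
\[
|\langle Tx,x\rangle|^2 + \|Tx\|^4 = (|\langle Tx,x\rangle| + \|Tx\|^2)^2 - 2\|Tx\|^2\,|\langle Tx,x\rangle|.
\]
I would bound the subtracted term from below using the definitions of the minimum norm and the Crawford number: $\|Tx\|^2 \geq m^2(T)$ and $|\langle Tx,x\rangle| \geq c(T)$, which gives $2\|Tx\|^2\,|\langle Tx,x\rangle| \geq 2c(T)m^2(T)$. Since this term enters with a minus sign, the lower bound produces an upper bound on the whole expression.

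Finally, putting the pieces together for each fixed $x$ and then taking the supremum over unit vectors $x\in\mathcal{H}$ yields the claimed inequality. The only subtle point is to make sure the sup/inf directions line up correctly: the $\theta_x$ is a specific real number depending on $x$, so replacing $w^2(e^{{\rm i}\theta_x}T+T^*T)$ by $\sup_\theta w^2(e^{{\rm i}\theta}T+T^*T)$ goes in the right direction, and the subtracted quantity must be bounded below (not above), which is exactly what $m(T)$ and $c(T)$ give us. I expect no genuine obstacle beyond this bookkeeping, since once the identity $|a|^2+b^2 = (|a|+b)^2 - 2|a|b$ is in hand, the numerical-radius interpretation of $e^{{\rm i}\theta}\langle Tx,x\rangle + \langle T^*Tx,x\rangle$ is immediate.
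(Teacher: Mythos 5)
Your proposal is correct and follows essentially the same route as the paper's own proof: the identity $|a|^2+b^2=(|a|+b)^2-2|a|b$, the choice of $\theta_x$ making $e^{{\rm i}\theta_x}\langle Tx,x\rangle$ real and nonnegative so that $(|\langle Tx,x\rangle|+\|Tx\|^2)^2=|\langle(e^{{\rm i}\theta_x}T+T^*T)x,x\rangle|^2\leq\sup_\theta w^2(e^{{\rm i}\theta}T+T^*T)$, and the lower bound $2\|Tx\|^2|\langle Tx,x\rangle|\geq 2c(T)m^2(T)$ before taking the supremum over unit vectors. No gaps; your handling of the sup/inf directions matches the paper's argument.
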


\begin{proof}
Let $x\in \mathcal{H}$ with $\|x\|=1.$ Then there exists $\theta \in \mathbb{R}$ such that $|\langle Tx,x \rangle |=e^{{\rm i} \theta}\langle Tx,x\rangle$. Now,
\begin{eqnarray*}
|\langle Tx,x \rangle |^2+\|Tx\|^4 &=& \langle e^{{\rm i} \theta}Tx,x\rangle^2+ \langle T^*Tx,x\rangle ^2 \\
&=& (\langle e^{{\rm i} \theta}Tx,x\rangle+ \langle T^*Tx,x\rangle )^2-2\langle e^{{\rm i} \theta}Tx,x\rangle \langle T^*Tx,x\rangle ).
\end{eqnarray*}
Hence, \begin{eqnarray*}
2\langle e^{{\rm i} \theta}Tx,x\rangle \langle T^*Tx,x\rangle +|\langle Tx,x \rangle |^2+\|Tx\|^4 &=&\left (\langle e^{{\rm i} \theta}Tx,x\rangle+ \langle T^*Tx,x\rangle \right)^2\\
\Rightarrow 2\langle e^{{\rm i} \theta}Tx,x\rangle \langle T^*Tx,x\rangle +|\langle Tx,x \rangle |^2+\|Tx\|^4&=& \langle (e^{{\rm i} \theta}T+  T^*T)x,x\rangle^2\\
\Rightarrow 2|\langle Tx,x\rangle| \langle T^*Tx,x\rangle +|\langle Tx,x \rangle |^2+\|Tx\|^4&\leq& w^2(e^{{\rm i} \theta}T+ T^*T).
\end{eqnarray*}
Therefore, $$2|\langle Tx,x\rangle |~~\langle T^*Tx,x\rangle +|\langle Tx,x \rangle |^2+\|Tx\|^4 \leq \sup_{\theta \in \mathbb{R}} w^2(e^{{\rm i} \theta}T+ T^*T)$$ and so
\[2c(T)m^2(T)+|\langle Tx,x \rangle |^2+\|Tx\|^4 \leq \sup_{\theta \in \mathbb{R}} w^2(e^{{\rm i} \theta}T+ T^*T).\]
Hence, taking supremum over all unit vectors in $\mathcal{H}$, we get 
\[2c(T)m^2(T)+dw^2(T) \leq \sup_{\theta \in \mathbb{R}} w^2(e^{{\rm i} \theta}T+ T^*T).\]
Thus we have the desired inequality of the theorem.
\end{proof}

\begin{remark}\label{rem-2}
	The inequality in \cite[Th. 2.1]{ZS} states that  $$dw^2(T)\leq w^2(|T|^2-T)+2\|T\|^2w(T).$$
 If we consider the  matrix $T=\left(\begin{array}{cc}
	-1 & 0\\
	0 & -2
	\end{array}\right),$ then it follows from Theorem \ref{th-upper2} that $dw^2(T)\leq 34$, whereas \cite[Th. 2.1]{ZS} gives $dw^2(T)\leq 52.$  This shows that the upper bound of $dw(T)$ obtained in Theorem \ref{th-upper2} is better than that obtained in \cite[Th. 2.1]{ZS}.
\end{remark}

In the next theorem we obtain both lower and upper bounds for the Davis-Wielandt radius of bounded linear operators.
	
\begin{theorem}\label{th-upperlower4}
Let $T\in \mathcal{B}(\mathcal{H}).$  Then 
	\begin{eqnarray*}
\frac{1}{2} \sup_{\theta \in \mathbb{R}} \left \{ w^2(e^{\rm i\theta}T+T^*T)+ c^2(e^{\rm i\theta}T-T^*T)\right \} &\leq& dw^2(T)\\ 
 &\leq & \frac{1}{2} \left \{w^2(T+T^*T)+ w^2(T-T^*T)\right \}.  
	\end{eqnarray*}
	\end{theorem}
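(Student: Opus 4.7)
The plan rests on a single algebraic identity: the complex parallelogram law $|a+b|^{2} + |a-b|^{2} = 2(|a|^{2}+|b|^{2})$ applied with $a = e^{\mathrm{i}\theta}\langle Tx,x\rangle$ and $b = \langle T^{*}Tx,x\rangle = \|Tx\|^{2}$. Since $|a| = |\langle Tx,x\rangle|$ and $|b| = \|Tx\|^{2}$, and since
$a \pm b = \langle (e^{\mathrm{i}\theta}T \pm T^{*}T)x,x\rangle$, this yields the master identity
\[
|\langle Tx,x\rangle|^{2} + \|Tx\|^{4} = \tfrac{1}{2}\left(|\langle (e^{\mathrm{i}\theta}T + T^{*}T)x,x\rangle|^{2} + |\langle (e^{\mathrm{i}\theta}T - T^{*}T)x,x\rangle|^{2}\right)
\]
for every unit vector $x \in \mathcal{H}$ and every $\theta \in \mathbb{R}$.

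For the upper bound I would specialize to $\theta = 0$. For each fixed unit $x$, the two inner-product terms on the right are bounded individually by $w^{2}(T+T^{*}T)$ and $w^{2}(T-T^{*}T)$. Taking the supremum over $x$ of the left-hand side of the identity (which gives $dw^{2}(T)$) produces the upper bound directly, without needing the sup to commute with the sum.

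For the lower bound I would keep $\theta$ free. The trick is that for every unit $x$ one has $|\langle (e^{\mathrm{i}\theta}T - T^{*}T)x,x\rangle|^{2} \geq c^{2}(e^{\mathrm{i}\theta}T - T^{*}T)$, a constant independent of $x$. Substituting this lower bound for the second term in the master identity, and then taking the supremum over unit vectors $x$ on both sides, the constant term passes through untouched while the first term becomes $w^{2}(e^{\mathrm{i}\theta}T + T^{*}T)$. This gives
\[
dw^{2}(T) \geq \tfrac{1}{2}\left(w^{2}(e^{\mathrm{i}\theta}T + T^{*}T) + c^{2}(e^{\mathrm{i}\theta}T - T^{*}T)\right),
\]
and since the left side does not depend on $\theta$, I may take the supremum over $\theta \in \mathbb{R}$ on the right to conclude.

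There is no real obstacle here: the only delicate point is arranging the inequalities so that one can interchange supremum with the two-term sum, and this is handled by using $w$ on one term (where the supremum lands on the factor that varies with $x$) and $c$ on the other (which is a uniform pointwise lower bound). Once the parallelogram identity is written down, both inequalities follow by routine manipulations of suprema and infima.
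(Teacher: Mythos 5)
Your proof is correct and follows essentially the same route as the paper: the parallelogram identity applied to $e^{{\rm i}\theta}\langle Tx,x\rangle$ and $\|Tx\|^{2}$, with the difference term bounded below by the Crawford number $c^{2}(e^{{\rm i}\theta}T-T^{*}T)$ (a constant in $x$, so it survives the supremum) and the sum term passed to $w^{2}(e^{{\rm i}\theta}T+T^{*}T)$, then $\theta=0$ for the upper bound. If anything, your use of the complex parallelogram law valid for every $\theta$ simultaneously is slightly cleaner than the paper's argument, which first chooses $\theta$ depending on $x$ so that $e^{{\rm i}\theta}\langle Tx,x\rangle$ is real before squaring, and then lets $\theta$ range freely only at the end.
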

	
	\begin{proof}
	Let $x\in \mathcal{H}$ with $\|x\|=1.$ Then there exists $\theta \in \mathbb{R}$ such that $|\langle Tx,x \rangle |=e^{{\rm i} \theta}\langle Tx,x\rangle$. Now,
	\begin{eqnarray*}
	|\langle Tx,x \rangle |^2+\|Tx\|^4 &=& \frac{1}{2}\left(|\langle Tx,x \rangle |+\langle Tx, Tx\rangle  \right)^2+ \frac{1}{2}\left(|\langle Tx,x \rangle |-\langle Tx, Tx\rangle  \right)^2\\
	&=& \frac{1}{2}\left(\langle e^{{\rm i} \theta}Tx,x \rangle +\langle T^*Tx, x\rangle  \right)^2+ \frac{1}{2}\left(\langle e^{{\rm i} \theta}Tx,x \rangle |-\langle T^*Tx, x\rangle  \right)^2\\
	&=& \frac{1}{2}\left(\langle (e^{{\rm i} \theta}T+ T^*T)x, x\rangle  \right)^2+ \frac{1}{2}\left(\langle (e^{{\rm i} \theta}T- T^*T)x, x\rangle  \right)^2.\\
\end{eqnarray*}
Hence, 
\[|\langle Tx,x \rangle |^2+\|Tx\|^4\geq  \frac{1}{2}\left(\langle (e^{{\rm i} \theta}T+ T^*T)x, x\rangle  \right)^2+ \frac{1}{2} c^2(e^{{\rm i} \theta}T- T^*T).\]
Taking supremum over all unit vectors in $\mathcal{H}$, we get	
\[dw^2(T) \geq \frac{1}{2}w^2(e^{\rm i\theta}T+T^*T)+\frac{1}{2}c^2(e^{\rm i\theta}T-T^*T).\]
This holds for all $\theta \in \mathbb{R}$, so
		\[dw^2(T)\geq \frac{1}{2} \sup_{\theta \in \mathbb{R}} \left \{ w^2(e^{\rm i\theta}T+T^*T)+ c^2(e^{\rm i\theta}T-T^*T)\right \}. \]
This implies the first inequality of the theorem.
Again
\begin{eqnarray*}
	|\langle Tx,x \rangle |^2+\|Tx\|^4 &=& \frac{1}{2}\left |\langle Tx,x \rangle +\langle Tx, Tx\rangle  \right|^2+ \frac{1}{2}\left|\langle Tx,x \rangle -\langle Tx, Tx\rangle  \right|^2\\
	&=& \frac{1}{2}\left|\langle Tx,x \rangle +\langle T^*Tx, x\rangle  \right|^2+ \frac{1}{2}\left|\langle Tx,x \rangle -\langle T^*Tx, x\rangle  \right|^2\\
	&=& \frac{1}{2}\left|\langle (T+ T^*T)x, x\rangle  \right|^2+ \frac{1}{2}\left|\langle (T- T^*T)x, x\rangle  \right|^2\\
	&\leq& \frac{1}{2} \left \{w^2(T+T^*T)+ w^2(T-T^*T)\right\}.
\end{eqnarray*}
Therefore, taking supremum over all unit vectors in $\mathcal{H}$, we get
	\[dw^2(T) \leq  \frac{1}{2} \left \{w^2(T+T^*T)+ w^2(T-T^*T)\right\}.\] Hence completes the proof.
\end{proof}	
 
	\begin{remark} 
	
We give  operators for which the inequality in Theorem \ref{th-upperlower4} improves on the existing inequalities in \cite[Th. 2.1,Th. 2.7]{ZS} and (\ref{1stbound}). The inequality in \cite[Th. 2.7]{ZS} states that if $ T \in \mathcal{B}(\mathcal{H}) $ then
\begin{eqnarray*}
dw^2(T)&\leq& \frac{1}{2}w(T^2)+\frac{1}{4}w\left (|T|^2+|T^*|^2\right )\\
&& +4w^2(T)\left( 2w^2(T)-c^2(T)+2w(T)\sqrt{w^2(T)-c^2(T)} \right ).
\end{eqnarray*}
	If we consider  $T=\left(\begin{array}{cc}
	1 & 0\\
	0 & -1
	\end{array}\right),$  then from  Theorem \ref{th-upperlower4} we get $dw^2(T)\leq 4$, whereas \cite[Th. 2.1]{ZS} gives $dw^2(T)\leq 6$ and \cite[Th. 2.7]{ZS} gives $dw^2(T)\leq 17.$ 
	Also Theorem \ref{th-upperlower4} gives $dw^2(T)\geq 2$, whereas the first inequality in (\ref{1stbound}) gives $dw^2(T)\geq 1$.
	\end{remark}

	 We now obtain an upper bound for the Davis-Wielandt radius of a bounded linear operator in terms of non-negative continuous functions. To prove this we need the following two  lemmas. First lemma is known as Power-Young inequality and the second one is known as McCarthy inequality.

	\begin{lemma} $($\cite{SMY}$)$\label{lem6}
	Let $a,b\geq 0$ and $\alpha,\beta>1$ such that $\frac{1}{\alpha}+\frac{1}{\beta}=1.$ Then
	\[ab\leq \frac{1}{\alpha}a^{\alpha}+\frac{1}{\beta}b^{\beta}.\]
	\end{lemma}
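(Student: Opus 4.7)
The plan is to derive this from the concavity of the logarithm. First, I would dispense with the degenerate case: if $a=0$ or $b=0$, then $ab=0\le \frac{1}{\alpha}a^{\alpha}+\frac{1}{\beta}b^{\beta}$ since the right-hand side is a sum of nonnegative terms. So we may assume $a,b>0$.

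For the main case, the key observation is that the conjugate-exponent hypothesis $\frac{1}{\alpha}+\frac{1}{\beta}=1$ makes $\frac{1}{\alpha}$ and $\frac{1}{\beta}$ a pair of convex-combination weights. Applying Jensen's inequality for the concave function $\log\colon(0,\infty)\to\mathbb{R}$ at the points $a^{\alpha}$ and $b^{\beta}$ with these weights gives
\[
\frac{1}{\alpha}\log a^{\alpha}+\frac{1}{\beta}\log b^{\beta}\;\leq\;\log\!\left(\frac{1}{\alpha}a^{\alpha}+\frac{1}{\beta}b^{\beta}\right).
\]
The left-hand side simplifies to $\log a+\log b=\log(ab)$, so exponentiating (using that $\exp$ is strictly increasing) yields exactly the claimed bound $ab\le \frac{1}{\alpha}a^{\alpha}+\frac{1}{\beta}b^{\beta}$.

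An equivalent calculus proof that avoids invoking Jensen is as follows. Fix $a\ge 0$ and consider $f(t)=\frac{1}{\alpha}a^{\alpha}+\frac{1}{\beta}t^{\beta}-at$ for $t\ge 0$. Then $f'(t)=t^{\beta-1}-a$, so the unique critical point on $(0,\infty)$ is $t_{0}=a^{1/(\beta-1)}$, which is a minimum since $f''(t)>0$. Using the identity $(\alpha-1)(\beta-1)=1$, which is equivalent to the conjugate-exponent condition, a direct substitution shows $f(t_{0})=0$; hence $f(b)\ge 0$ for every $b\ge 0$, which is the stated inequality.

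The main obstacle here is essentially none, as this is the classical Young inequality and appears in the cited reference \cite{SMY}. The only conceptual point worth highlighting is that the hypothesis $\frac{1}{\alpha}+\frac{1}{\beta}=1$ enters in exactly the right way: in the Jensen approach it ensures that the weights sum to $1$, and in the calculus approach it is precisely what makes the critical value cancel to zero.
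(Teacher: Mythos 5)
Your proof is correct. The paper itself offers no proof of this lemma --- it is quoted verbatim from the cited reference \cite{SMY} as the classical (Power-)Young inequality --- so there is no in-paper argument to compare against. Both of your routes are standard and sound: the Jensen/weighted-AM--GM argument via concavity of $\log$ handles the positive case cleanly after the trivial reduction $a=0$ or $b=0$, and your calculus verification is also complete --- the critical point $t_{0}=a^{1/(\beta-1)}$ satisfies $t_{0}^{\beta}=a\,t_{0}=a^{\beta/(\beta-1)}=a^{\alpha}$ because $\frac{1}{\alpha}+\frac{1}{\beta}=1$ forces $\alpha=\frac{\beta}{\beta-1}$, whence $f(t_{0})=a^{\alpha}\left(\frac{1}{\alpha}+\frac{1}{\beta}-1\right)=0$ exactly as you claim. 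Either argument alone would suffice.
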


	\begin{lemma}$($\cite{K}$)$ \label{lem7}
	Let $A\geq 0.$ Then for all $x\in \mathcal{H}$ with $\|x\|=1,$ we have \[\langle Ax,x\rangle^p\leq \langle A^p x,x\rangle, ~~p\geq 1.\]
	\end{lemma}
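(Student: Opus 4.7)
The plan is to prove the McCarthy inequality via the functional calculus for positive operators combined with Jensen's inequality applied to the convex function $\phi(t)=t^p$.

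First I would invoke the spectral theorem: since $A\geq 0$, there is a spectral measure $E$ supported on $\sigma(A)\subseteq[0,\|A\|]$ with $A=\int\lambda\,dE(\lambda)$ and $A^p=\int\lambda^p\,dE(\lambda)$. Fixing a unit vector $x\in\mathcal{H}$, the assignment $\mu(B):=\langle E(B)x,x\rangle$ defines a Borel probability measure on $\sigma(A)$, since $E(\sigma(A))=I$ and $\|x\|=1$. Under this measure one has the representations
$$\langle Ax,x\rangle=\int\lambda\,d\mu(\lambda),\qquad \langle A^px,x\rangle=\int\lambda^p\,d\mu(\lambda).$$

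Next I would appeal to the convexity of $\phi(t)=t^p$ on $[0,\infty)$ for $p\geq 1$. Jensen's inequality for the probability measure $\mu$ then gives $\phi\bigl(\int\lambda\,d\mu\bigr)\leq\int\phi(\lambda)\,d\mu$, which translates to $\langle Ax,x\rangle^p\leq\langle A^px,x\rangle$. Since $x$ was an arbitrary unit vector, the inequality is established.

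The argument is short and the only subtlety is the reduction to a scalar Jensen inequality via the spectral measure; there is no real obstacle once the functional calculus is available. If one prefers to avoid the spectral theorem, an elementary route is to handle $p=2$ directly from Cauchy--Schwarz, namely $\langle Ax,x\rangle^2=\|A^{1/2}x\|^4\leq\|Ax\|^2=\langle A^2x,x\rangle$, bootstrap to integer $p$ by iteration, and then use continuity together with the density of dyadic rationals (or the Heinz--L\"owner interpolation) to cover all real $p\geq 1$; this elementary route is more technical, which is why the spectral/Jensen approach is preferable.
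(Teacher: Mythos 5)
Your spectral-measure-plus-Jensen argument is correct and complete: for a unit vector $x$ the map $B\mapsto\langle E(B)x,x\rangle$ is indeed a Borel probability measure on $\sigma(A)$, the two inner products are its first and $p$-th moments, and convexity of $t\mapsto t^p$ for $p\geq 1$ on $[0,\infty)$ closes the argument. The paper itself offers no proof of this lemma --- it is quoted from Kittaneh's paper \cite{K} as the known McCarthy (H\"older--McCarthy) inequality --- so there is nothing internal to compare against; what you have written is the standard proof of that cited result. The only soft spot is the optional ``elementary route'' at the end: the $p=2$ case via Cauchy--Schwarz is fine, and iteration gives dyadic powers, but the passage from dyadic exponents to all real $p\geq1$ by ``continuity and density'' needs more care than the sentence suggests (one must know, e.g., monotonicity or continuity of $p\mapsto\langle A^px,x\rangle^{1/p}$ in a usable form). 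Since you only offer that as an alternative and rightly prefer the spectral argument, this does not affect the validity of your proof.
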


\begin{lemma}	$($\cite[Th. 5]{K}$)$  \label{lem9}
Let $T\in \mathcal{B}(\mathcal{H})$ and $f,g$ be two non-negative continuous functions on $[0,\infty)$ such that $f(t)g(t)=t,$ for all $t\in [0,\infty).$ Then
$$|\langle Tx,y \rangle|\leq \|f(|T|)x\| \|g(|T^*|)y\|, ~~\mbox{for all} ~~x,y\in \mathcal{H}.$$
	\end{lemma}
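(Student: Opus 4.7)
The plan is to exploit the polar decomposition $T = U|T|$, where $U \in \mathcal{B}(\mathcal{H})$ is a partial isometry with $\|U\| \le 1$, together with the continuous functional calculus applied to $|T|$ and $|T^*|$. The point is that the scalar identity $f(t)g(t)=t$ on $[0,\infty)$ lifts through the functional calculus to the operator identity $f(|T|)g(|T|) = g(|T|)f(|T|) = |T|$, so $|T|$ factors as a product of two commuting positive operators that one can split between the two slots of the inner product.

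The first key step is to establish the intertwining identity $U\,h(|T|) = h(|T^*|)\,U$ for every continuous $h$ on $[0,\infty)$. For $h(t)=t$ this is the standard fact $U|T| = |T^*|U$ (both sides equal $T$). It extends to monomials by induction, since
\[
U|T|^{n+1} = (U|T|)\,|T|^n = |T^*|U\,|T|^n = |T^*|^{n+1} U,
\]
hence to polynomials by linearity, and finally to arbitrary continuous $h$ via uniform approximation by polynomials on the compact interval $[0,\|T\|]$ combined with continuity of the functional calculus. This is the step that requires the most care, because $U$ is only a partial isometry, so one must verify that the identities are consistent on $\overline{\operatorname{ran}}(|T|)$ and extended by zero on its orthogonal complement.

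Once the intertwining is in hand, I would write, for any $x \in \mathcal{H}$,
\[
Tx \;=\; U|T|x \;=\; U\,g(|T|)\,f(|T|)\,x \;=\; g(|T^*|)\,U\,f(|T|)\,x,
\]
so that
\[
\langle Tx, y\rangle \;=\; \bigl\langle U\,f(|T|)\,x,\; g(|T^*|)\,y\bigr\rangle.
\]
Applying the Cauchy-Schwarz inequality and then using $\|U\|\le 1$ yields
\[
|\langle Tx,y\rangle| \;\le\; \|U\,f(|T|)\,x\|\,\|g(|T^*|)\,y\| \;\le\; \|f(|T|)\,x\|\,\|g(|T^*|)\,y\|,
\]
which is the desired mixed Schwarz-type bound. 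The main obstacle, as indicated, is the rigorous justification of the intertwining $U\,h(|T|)=h(|T^*|)\,U$ on the whole Hilbert space; after that, the inequality follows from a single application of Cauchy-Schwarz together with the contractivity of the partial isometry $U$.
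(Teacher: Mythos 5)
Your argument is correct, and there is nothing in the paper to compare it against: the paper states this lemma as a quoted result from Kittaneh \cite[Th. 5]{K} and gives no proof. Your reconstruction is essentially the standard one — polar decomposition $T=U|T|$, the factorization $|T|=g(|T|)f(|T|)$ via the functional calculus, the intertwining relation, and a single application of Cauchy--Schwarz with $\|U\|\le 1$. The one step you flag as delicate, namely $U\,h(|T|)=h(|T^*|)\,U$ for all continuous $h$, is actually unproblematic and needs no separate treatment on $\overline{\operatorname{ran}}(|T|)$ versus its orthogonal complement: the identity holds for constants trivially ($cU=cU$), for monomials by your induction starting from $U|T|=|T^*|U$, hence for all polynomials, and passes to arbitrary continuous $h$ by uniform polynomial approximation on $[0,\|T\|]\supseteq\sigma(|T|)\cup\sigma(|T^*|)$ together with norm-continuity of the functional calculus. (This is in contrast to the conjugation identity $Uh(|T|)U^*=h(|T^*|)$, which genuinely requires $h(0)=0$; your intertwining formulation sidesteps that issue, which is a small advantage over the variant of the proof that estimates $\|g(|T|)U^*y\|$ by $\|g(|T^*|)y\|$.) The remaining steps — self-adjointness of $g(|T^*|)$ to move it across the inner product, and contractivity of the partial isometry — are all in order.
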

	
Now, we are in a position to prove the following inequality for the Davis-Wielandt radius of bounded linear operators.	
	
	\begin{theorem}\label{upper10}
	Let $T\in \mathcal{B}(\mathcal{H})$. Then
	\[dw^2(T)\leq \left\|  \frac{1}{\alpha_1}f_1^{2\alpha_1}(|T|)+\frac{1}{\beta_1}g_1^{2\beta_1}(|T^*|) + \frac{1}{\alpha_2}f_2^{2\alpha_2}(|T^*T|)+\frac{1}{\beta_2}g_2^{2\beta_2}(|T^*T|) \right\|,\] 
	where $\alpha_i,\beta_i>1$ with $\frac{1}{\alpha_i}+\frac{1}{\beta_i}=1$ and $f_i, g_i$ are two non-negative continuous functions on $[0,\infty)$ such that $f_i(t)g_i(t)=t,$ for all $t\in [0,\infty), i=1,2.$
	\end{theorem}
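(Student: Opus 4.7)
The plan is to estimate the two summands of $|\langle Tx,x\rangle|^2+\|Tx\|^4$ separately for each unit vector $x\in\mathcal{H}$, then take the supremum. In each case the recipe is the same: bound an inner product via the mixed Schwarz inequality (Lemma \ref{lem9}), square and rewrite the norms as inner products of positive operators, split the resulting product using Power-Young (Lemma \ref{lem6}), and then pull the exponents inside each inner product via McCarthy (Lemma \ref{lem7}).

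For the first summand, I would take $y=x$ in Lemma \ref{lem9} with the pair $f_1,g_1$ to get
\[
|\langle Tx,x\rangle|^{2} \leq \|f_1(|T|)x\|^{2}\,\|g_1(|T^*|)x\|^{2} = \langle f_1^{2}(|T|)x,x\rangle\,\langle g_1^{2}(|T^*|)x,x\rangle.
\]
Applying Lemma \ref{lem6} with exponents $\alpha_1,\beta_1$ and then Lemma \ref{lem7} to each factor (both $f_1^{2}(|T|)$ and $g_1^{2}(|T^*|)$ are positive operators, so McCarthy applies with $p=\alpha_1$ and $p=\beta_1$ respectively) yields
\[
|\langle Tx,x\rangle|^{2} \leq \tfrac{1}{\alpha_1}\langle f_1^{2\alpha_1}(|T|)x,x\rangle + \tfrac{1}{\beta_1}\langle g_1^{2\beta_1}(|T^*|)x,x\rangle.
\]

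For the second summand, I would write $\|Tx\|^{4}=\langle T^*Tx,x\rangle^{2}$ and run the identical argument with $T^*T$ in place of $T$ and the pair $f_2,g_2$. Since $T^*T$ is positive, $|T^*T|=T^*T=|(T^*T)^{*}|$, so the mixed Schwarz inequality gives $|\langle T^*Tx,x\rangle|\leq \|f_2(|T^*T|)x\|\,\|g_2(|T^*T|)x\|$; squaring, applying Power-Young with $\alpha_2,\beta_2$, and then McCarthy yields
\[
\|Tx\|^{4} \leq \tfrac{1}{\alpha_2}\langle f_2^{2\alpha_2}(|T^*T|)x,x\rangle + \tfrac{1}{\beta_2}\langle g_2^{2\beta_2}(|T^*T|)x,x\rangle.
\]
Adding the two displays and setting
\[
S := \tfrac{1}{\alpha_1}f_1^{2\alpha_1}(|T|)+\tfrac{1}{\beta_1}g_1^{2\beta_1}(|T^*|)+\tfrac{1}{\alpha_2}f_2^{2\alpha_2}(|T^*T|)+\tfrac{1}{\beta_2}g_2^{2\beta_2}(|T^*T|),
\]
we get $|\langle Tx,x\rangle|^{2}+\|Tx\|^{4}\leq \langle Sx,x\rangle\leq \|S\|$ since $S\geq 0$; taking the supremum over unit vectors $x$ completes the proof.

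The main obstacle is essentially bookkeeping rather than a conceptual hurdle: one must match up the Young exponents with the powers produced by McCarthy so that each operator-function appears with exactly the power $2\alpha_i$ or $2\beta_i$ advertised in the statement, and one must justify reusing Lemma \ref{lem9} for the operator $T^*T$, which is legitimate precisely because $T^*T$ is self-adjoint and positive, so $|(T^*T)^{*}|=|T^*T|=T^*T$.
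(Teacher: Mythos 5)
Your proposal is correct and follows essentially the same route as the paper's proof: mixed Schwarz (Lemma \ref{lem9}) applied to $T$ and to the positive operator $T^*T$, then Power--Young (Lemma \ref{lem6}) and McCarthy (Lemma \ref{lem7}), and finally bounding $\langle Sx,x\rangle$ by $\|S\|$ before taking the supremum. The only cosmetic difference is that you treat the two summands separately while the paper runs both estimates in one chain of inequalities.
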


	\begin{proof}
	Let $x\in \mathcal{H}$ with $\|x\|=1$. Then using Lemmas \ref {lem9}, \ref{lem6} and \ref{lem7}, we get
	\begin{eqnarray*}
	&& |\langle Tx,x \rangle |^2+\|Tx\|^4 \\
	&=& |\langle Tx,x \rangle |^2+ \langle T^*Tx,x \rangle^2\\
	&\leq& \langle f_1^2(|T|)x,x \rangle \langle g_1^2(|T^*|)x,x \rangle+ \langle f_2^2(|T^*T|)x,x \rangle \langle g_2^2(|T^*T|)x,x \rangle\\
	&\leq& \frac{1}{\alpha_1}\langle f_1^2(|T|)x,x \rangle^{\alpha_1}+\frac{1}{\beta_1} \langle g_1^2(|T^*|)x,x \rangle^{\beta_1} + \frac{1}{\alpha_2}\langle f_2^2(|T^*T|)x,x \rangle^{\alpha_2}\\
	&&  +\frac{1}{\beta_2} \langle g_2^2(|T^*T|)x,x \rangle^{\beta_2}\\
	&\leq& \frac{1}{\alpha_1}\langle f_1^{2\alpha_1}(|T|)x,x \rangle+\frac{1}{\beta_1} \langle g_1^{2\beta_1}(|T^*|)x,x \rangle + \frac{1}{\alpha_2}\langle f_2^{2\alpha_2}(|T^*T|)x,x \rangle\\
	&& + \frac{1}{\beta_2} \langle g_2^{2\beta_2}(|T^*T|)x,x \rangle\\
	&=& \left \langle \left (\frac{1}{\alpha_1}f_1^{2\alpha_1}(|T|)+\frac{1}{\beta_1}g_1^{2\beta_1}(|T^*|)+\frac{1}{\alpha_2}f_2^{2\alpha_2}(|T^*T|)+\frac{1}{\beta_2}g_2^{2\beta_2}(|T^*T|) \right)x,x \right \rangle\\
	&\leq& \left\| \frac{1}{\alpha_1}f_1^{2\alpha_1}(|T|)+\frac{1}{\beta_1}g_1^{2\beta_1}(|T^*|)+\frac{1}{\alpha_2}f_2^{2\alpha_2}(|T^*T|)+\frac{1}{\beta_2}g_2^{2\beta_2}(|T^*T|) \right\|.
	\end{eqnarray*}
	Therefore, taking supremum over all unit vectors in $\mathcal{H}$, we get the required inequality.
	\end{proof}

	As a consequence of  Theorem \ref{upper10}, we get the following corollary.
	
	\begin{cor}\label{upper8}
	Let $T\in \mathcal{B}(\mathcal{H}).$ Then the following inequalities are hold.\\
	$ (i) $ If $\alpha,\beta>1$ such that $\frac{1}{\alpha}+\frac{1}{\beta}=1$ then we have
	 \[ dw^2(T) \leq  \left \| \frac{1}{\alpha}|T|^{\alpha}\left(1+|T|^{\alpha}\right)+\frac{1}{\beta}\left(|T^*|^{\beta}+|T|^{2\beta}\right) \right \|.\]
	$ (ii) ~~\hspace{1.8cm}  dw^2(T) \leq  \frac{1}{2}\left \| |T|^2+|T^*|^2  + 2|T|^4 \right \|.$
	\end{cor}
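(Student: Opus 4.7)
The plan is to deduce both inequalities as direct specializations of Theorem \ref{upper10}, with suitable choices of the free parameters $f_i$, $g_i$, $\alpha_i$, $\beta_i$. The crucial observation is that $|T^*T|=T^*T=|T|^2$, since $T^*T$ is already positive (and hence self-adjoint), so all occurrences of $|T^*T|$ in Theorem \ref{upper10} can be rewritten purely in terms of $|T|$.

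For part $(i)$, I would choose $f_1(t)=g_1(t)=f_2(t)=g_2(t)=\sqrt{t}$ on $[0,\infty)$, which trivially satisfy the factorization condition $f_i(t)g_i(t)=t$, and set $\alpha_1=\alpha_2=\alpha$, $\beta_1=\beta_2=\beta$. With these choices,
\[
f_1^{2\alpha_1}(|T|)=|T|^{\alpha},\qquad g_1^{2\beta_1}(|T^*|)=|T^*|^{\beta},
\]
while the identity $|T^*T|=|T|^2$ yields $f_2^{2\alpha_2}(|T^*T|)=|T^*T|^{\alpha}=|T|^{2\alpha}$ and $g_2^{2\beta_2}(|T^*T|)=|T|^{2\beta}$. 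Substituting into the right-hand side of Theorem \ref{upper10} gives
\[
dw^2(T)\leq \left\|\tfrac{1}{\alpha}|T|^{\alpha}+\tfrac{1}{\beta}|T^*|^{\beta}+\tfrac{1}{\alpha}|T|^{2\alpha}+\tfrac{1}{\beta}|T|^{2\beta}\right\|,
\]
and factoring $|T|^{\alpha}$ out of the two $1/\alpha$-terms produces exactly the bound claimed in $(i)$.

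Part $(ii)$ is then an immediate specialization of part $(i)$ to the self-conjugate exponents $\alpha=\beta=2$: the expression on the right becomes $\tfrac12\bigl\||T|^2(1+|T|^2)+|T^*|^2+|T|^4\bigr\|$, and combining the two copies of $\tfrac12|T|^4$ into $|T|^4$ yields the stated inequality $dw^2(T)\leq\tfrac12\||T|^2+|T^*|^2+2|T|^4\|$.

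I do not anticipate any genuine obstacle here; the corollary is essentially a substitution exercise into Theorem \ref{upper10}. The only point that merits a one-line remark in the write-up is the identity $|T^*T|=|T|^2$, without which one would leave the bound in a less tidy form involving $|T^*T|^{\alpha}$ and $|T^*T|^{\beta}$.
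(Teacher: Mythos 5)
Your proposal is correct and follows the paper's own proof exactly: the paper also substitutes $f_1=g_1=f_2=g_2=t^{1/2}$ with $\alpha_1=\alpha_2=\alpha$, $\beta_1=\beta_2=\beta$ into Theorem \ref{upper10} to get $(i)$, and then sets $\alpha=\beta=2$ to get $(ii)$. Your additional remark that $|T^*T|=|T|^2$ is a worthwhile clarification that the paper leaves implicit.
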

	
	\begin{proof}
		If we take $f_1(t)=g_1(t)=f_2(t)=g_2(t)=t^{\frac{1}{2}}$ and $\alpha_1=\alpha_2=\alpha, \beta_1=\beta_2=\beta$ in Theorem \ref{upper10}, then we get $(i).$ Further taking $\alpha=\beta=2$ in (i), we get $(ii).$
	\end{proof}

\begin{remark}	
We would like to note that the inequality in Corollary \ref{upper8} (ii) becomes equality if $T$ is normaloid.
\end{remark}

Next, we need the following lemma which is a generalization of Cauchy-Schwarz inequality, the proof of which can be found in \cite{B}.	
	
\begin{lemma}\label{lem13}
Let $a,b,c \in \mathcal{H}$ with $\|c\|=1$. Then 
\[|\langle a,c\rangle \langle c,b \rangle|\leq \frac{1}{2}\left (|\langle a,b \rangle |+\|a\|\|b\| \right ).\]
\end{lemma}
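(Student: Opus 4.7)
The plan is to reduce the bound to the ordinary Cauchy--Schwarz inequality by exploiting the fact that the rank-one projection onto $\mathrm{span}(c)$ sits naturally inside a self-adjoint unitary, namely a reflection.

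First I would introduce the orthogonal projection $P$ onto $\mathrm{span}(c)$, that is, $Px = \langle x, c\rangle c$. Since $\|c\|=1$, a direct computation gives $\langle Pa, b\rangle = \langle a, c\rangle \langle c, b\rangle$, which is precisely the quantity to be controlled. A naive bound $|\langle Pa, b\rangle| \leq \|Pa\|\|b\| = |\langle a, c\rangle|\,\|b\|$ discards all information about $\langle a, b\rangle$, so a sharper device is needed.

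The key step is to consider the operator $U := 2P - I$. Because $P^*=P$ and $P^2 = P$, one checks directly that $U^*=U$ and
\[
U^{2} = 4P^{2} - 4P + I = I,
\]
so $U$ is a self-adjoint unitary (a reflection through the line spanned by $c$). Expanding the inner product then yields
\[
\langle Ua, b\rangle \;=\; 2\langle Pa, b\rangle - \langle a, b\rangle \;=\; 2\langle a, c\rangle \langle c, b\rangle - \langle a, b\rangle,
\]
and since $U$ is unitary, the ordinary Cauchy--Schwarz inequality gives $|\langle Ua, b\rangle| \leq \|Ua\|\,\|b\| = \|a\|\,\|b\|$.

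To finish, I would apply the triangle inequality to recover the symmetric form:
\[
2|\langle a, c\rangle \langle c, b\rangle| \;=\; |\langle Ua, b\rangle + \langle a, b\rangle| \;\leq\; |\langle Ua, b\rangle| + |\langle a, b\rangle| \;\leq\; \|a\|\,\|b\| + |\langle a, b\rangle|,
\]
and dividing by $2$ gives the claimed estimate. I do not anticipate any real obstacle: everything is routine once the reflection $U = 2P - I$ is identified, and spotting that this is the correct operator with which to apply Cauchy--Schwarz is really the only substantive idea in the argument.
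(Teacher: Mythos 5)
Your argument is correct, and it is worth noting that the paper itself offers no proof of this lemma at all: it simply cites Buzano's original article \cite{B}. So your reflection argument is a genuine, self-contained alternative to an external reference. Every step checks out: with $Px=\langle x,c\rangle c$ one has $\langle Pa,b\rangle=\langle a,c\rangle\langle c,b\rangle$ because $\|c\|=1$; the operator $U=2P-I$ satisfies $U^*=U$ and $U^2=I$, hence $\|Ua\|=\|a\|$; and the identity $2\langle a,c\rangle\langle c,b\rangle=\langle Ua,b\rangle+\langle a,b\rangle$ combined with the triangle inequality and Cauchy--Schwarz gives exactly the claimed bound. For comparison, the more common textbook route (and essentially Buzano's) decomposes $a=\langle a,c\rangle c+a'$ and $b=\langle b,c\rangle c+b'$ with $a',b'\perp c$ and then applies the two-dimensional Cauchy--Schwarz inequality to the pair $\left(|\langle a,c\rangle|,\|a'\|\right)$, $\left(|\langle b,c\rangle|,\|b'\|\right)$; your version trades that computation for the single observation that $2P-I$ is a self-adjoint unitary, which is arguably cleaner and makes the appearance of both $|\langle a,b\rangle|$ and $\|a\|\|b\|$ transparent. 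Either way, the lemma is established, and your proof could be inserted verbatim where the paper currently defers to \cite{B}.
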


Using Lemma \ref{lem13}, we prove the following inequalities.

\begin{theorem}\label{upper15}
Let $T\in \mathcal{B}(\mathcal{H}).$ Then the following inequalities are hold.
\begin{eqnarray*}
(i) && dw^2(T)\leq \left \||T|^2+|T|^4\right \|,\\
(ii) && dw^2(T)\leq \frac{1}{2}\left(w(T^2)+\|T\|^2 \right)+\left \| T \right\|^4.
\end{eqnarray*}
\end{theorem}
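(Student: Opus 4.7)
The plan is to derive both inequalities by splitting $dw^2(T) = \sup_{\|x\|=1}\bigl(|\langle Tx,x\rangle|^2 + \|Tx\|^4\bigr)$ and bounding the two summands separately, where Lemma~\ref{lem13} handles the $|\langle Tx,x\rangle|^2$ term under appropriate choices of $a,b,c$, and either Lemma~\ref{lem7} or a direct norm estimate handles $\|Tx\|^4$.

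For part (i), I would apply Lemma~\ref{lem13} to the triple $a=b=Tx$, $c=x$. Since $\langle Tx,x\rangle\langle x,Tx\rangle = |\langle Tx,x\rangle|^2$ and $\langle Tx,Tx\rangle = \|Tx\|^2$, the lemma yields
\[
|\langle Tx,x\rangle|^2 \leq \tfrac{1}{2}\bigl(\|Tx\|^2 + \|Tx\|^2\bigr) = \|Tx\|^2 = \langle |T|^2 x,x\rangle.
\]
For the second summand, write $\|Tx\|^4 = \langle |T|^2 x,x\rangle^2$ and apply Lemma~\ref{lem7} with the positive operator $A=|T|^2$ and $p=2$ to obtain $\|Tx\|^4 \leq \langle |T|^4 x,x\rangle$. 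Adding and using the linearity of the inner product gives $|\langle Tx,x\rangle|^2+\|Tx\|^4 \leq \bigl\langle (|T|^2+|T|^4)x,x\bigr\rangle \leq \bigl\| |T|^2+|T|^4\bigr\|$; taking the supremum over unit vectors completes (i).

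For part (ii), the idea is to force $T^2$ to appear by choosing $a=T^*x$, $b=Tx$, $c=x$ in Lemma~\ref{lem13}. First I would verify the two identities $\langle T^*x,x\rangle = \overline{\langle Tx,x\rangle}$ (so that $|\langle T^*x,x\rangle\langle x,Tx\rangle|=|\langle Tx,x\rangle|^2$) and $\langle T^*x,Tx\rangle = \langle T^2x,x\rangle$ (which follows from the adjoint identity $\langle T^*x,Tx\rangle = \langle x,T^2x\rangle$). The lemma then gives
\[
|\langle Tx,x\rangle|^2 \leq \tfrac{1}{2}\bigl(|\langle T^2x,x\rangle| + \|T^*x\|\,\|Tx\|\bigr) \leq \tfrac{1}{2}\bigl(w(T^2) + \|T\|^2\bigr),
\]
using $\|T^*x\|\leq\|T^*\|=\|T\|$ and $\|Tx\|\leq\|T\|$. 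Combining with the trivial estimate $\|Tx\|^4\leq\|T\|^4$ and taking the supremum yields (ii).

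There is no serious obstacle here; the only nontrivial choice is selecting $a=T^*x$, $b=Tx$ in Lemma~\ref{lem13} so that the mixed inner product produces $T^2$ rather than $T^*T$ or $TT^*$. Once this pairing is spotted, both inequalities reduce to short computations.
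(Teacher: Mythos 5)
Your proposal is correct and follows essentially the same route as the paper: both parts rest on applying Lemma~\ref{lem13} with $c=x$ and the pairings $a=b=Tx$ for (i) and $\{Tx,\,T^*x\}$ for (ii), the latter being exactly the choice that makes $T^2$ appear. The only cosmetic differences are that the paper bounds $\|Tx\|^4$ via a second application of Lemma~\ref{lem13} to $a=b=|T|^2x$ rather than via Lemma~\ref{lem7} (the two are the same Cauchy--Schwarz estimate in disguise), and in (ii) it likewise routes the $\|Tx\|^4\le\|T\|^4$ step through Lemma~\ref{lem13} instead of the trivial norm bound.
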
	

\begin{proof}
Let $x\in \mathcal{H}$ with $\|x\|=1$. Then by using Lemma \ref{lem13}, we get 
\begin{eqnarray*}
	|\langle Tx,x \rangle |^2+\|Tx\|^4 & = & |\langle Tx,x \rangle \langle x,Tx \rangle| + \langle |T|^2x,x \rangle \langle x,|T|^2x\rangle \\ 
	&\leq & \frac{1}{2}(\|Tx\|^2+ \langle Tx,Tx \rangle ) + \frac{1}{2}(\||T|^2x\|^2+\langle |T|^2x,|T|^2x \rangle ) \\
	&= & \langle |T|^2x,x \rangle +\langle |T|^4x,x \rangle \\
	&= & \langle (|T|^2+|T|^4)x,x \rangle.
\end{eqnarray*}
Therefore, taking supremum over all unit vectors in $\mathcal{H}$, we get the inequality (i). Again considering $|\langle Tx,x \rangle |^2 = |\langle Tx,x \rangle \langle x,T^*x \rangle|$, $|\langle Tx,Tx \rangle |^2 = \langle |T|^2x,x \rangle \langle x,|T|^2x \rangle $ and then using Lemma \ref{lem13}, we get the inequality (ii).
\end{proof}

\begin{remark}	
It is easy to see that both the inequalities in Theorem \ref{upper15} becomes equality if $T$ is normaloid.
\end{remark}

 In the following remark, we show that there exist operators for which the inequalities in Theorem \ref{upper15} are better than the existing inequalities in \cite{ZS}. 

\begin{remark} The inequalities in \cite[Th. 2.13,Th. 2.14, Th. 2.16, Th. 2.17]{ZS} are noted respectively as follows
\begin{eqnarray*}
dw^2(T)&\leq& \max\{ \|T\|^2, \|T\|^4\}+\sqrt{2}w(|T|^2T),\\
dw^2(T)&\leq& \frac{1}{2}\left ( w(|T|^4+|T|^2)+w(|T|^4-|T|^2)\right)+\sqrt{2}w(|T|^2T),\\
dw^2(T)&\leq& \max\{w(T),w(|T|^2) \}\left ( w(|T|^4+|T|^2)+2 w(|T|^2T) \right )^{\frac{1}{2}},\\
dw^2(T)&\leq& \|T\| \max\{w(T),w(|T|^2) \} \left ( 1+\|T\|^2+2w(T)  \right )^{\frac{1}{2}}.
\end{eqnarray*}
 If we take $T=\left(\begin{array}{cc}
	1 & 1\\
	0 & 0
	\end{array}\right)$  then from Theorem \ref{upper15} (i) and Theorem \ref{upper15} (ii), we get  $dw^2(T)\leq 6$ and $ 5.6$, respectively, whereas the inequalities in \cite[Th. 2.1, Th. 2.7, Th. 2.13, Th. 2.14, Th. 2.16, Th. 2.17]{ZS} respectively gives  $dw^2(T)\leq 6.283, 35.416, 6.828,6.828,  $ $ 6.325, 6.58$.
	Thus, the bounds of $dw(T)$ obtained in Theorem \ref{upper15} (i) and Theorem \ref{upper15} (ii) are better than the existing ones.
\end{remark}

Next, we obtain an upper bound for the Davis-Wielandt radius of a bounded linear operator using the following lemma, the proof of which can be found in \cite[Lemma 2.1]{Dra}.

\begin{lemma}$($\cite[Lemma 2.1]{Dra}$)$ \label{lem18}
Let $x,y \in \mathcal{H}$ and  $\lambda \in \mathbb{C}$. Then the following equality holds:
\[\|x\|^2\|y\|^2-|\langle x,y\rangle|^2=\|x-\lambda y\|^2\|y\|^2-|\langle x-\lambda y,y\rangle|^2.\]
\end{lemma}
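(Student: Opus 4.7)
The identity is purely algebraic, and the cleanest route is direct expansion combined with the observation that the quantity $\|x\|^2\|y\|^2 - |\langle x,y\rangle|^2$ is invariant under replacing $x$ by $x - \lambda y$ for any scalar $\lambda \in \mathbb{C}$. This is essentially a Gram-determinant invariance: shifting $x$ along the direction of $y$ cannot change the ``area'' $\|x\|\|y\|\sin\theta$ of the parallelogram spanned by $x$ and $y$.

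My plan is to set $z = x - \lambda y$ and then expand both quantities on the right-hand side. First I would compute
\[
\langle z, y\rangle \;=\; \langle x,y\rangle - \lambda\|y\|^{2},
\]
and hence
\[
|\langle z,y\rangle|^{2} \;=\; |\langle x,y\rangle|^{2} - \lambda\|y\|^{2}\overline{\langle x,y\rangle} - \bar\lambda\|y\|^{2}\langle x,y\rangle + |\lambda|^{2}\|y\|^{4}.
\]
Next I would expand
\[
\|z\|^{2} \;=\; \|x\|^{2} - \lambda\overline{\langle x,y\rangle} - \bar\lambda\langle x,y\rangle + |\lambda|^{2}\|y\|^{2},
\]
and multiply through by $\|y\|^{2}$.

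Subtracting the two expressions, the four cross terms involving $\lambda$ and $\bar\lambda$ cancel pairwise against their counterparts, as does the term $|\lambda|^{2}\|y\|^{4}$, leaving exactly $\|x\|^{2}\|y\|^{2} - |\langle x,y\rangle|^{2}$. Since the computation is a simple bookkeeping exercise with conjugates, there is no real obstacle; the only care needed is to track $\langle y,x\rangle = \overline{\langle x,y\rangle}$ correctly when writing out $\|z\|^{2}\|y\|^{2}$. Alternatively, one could package the argument as: both sides equal the Gram determinant $\det\!\begin{pmatrix}\|y\|^{2} & \langle y,x\rangle\\ \langle x,y\rangle & \|x\|^{2}\end{pmatrix}$, which is translation-invariant under $x \mapsto x - \lambda y$ because such a shift corresponds to an elementary row/column operation of determinant $1$.
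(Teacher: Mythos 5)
Your proof is correct: the direct expansion goes through exactly as you describe, with the two cross terms $-\lambda\|y\|^{2}\overline{\langle x,y\rangle}$ and $-\bar\lambda\|y\|^{2}\langle x,y\rangle$ and the term $|\lambda|^{2}\|y\|^{4}$ appearing identically in $\|x-\lambda y\|^{2}\|y\|^{2}$ and in $|\langle x-\lambda y,y\rangle|^{2}$, so they cancel in the difference. The paper itself offers no proof of this lemma, only the citation to Dragomir, so there is nothing to compare against; your computation (or equivalently the Gram-determinant invariance you mention, since $x\mapsto x-\lambda y$ conjugates the Gram matrix by a unit triangular matrix) supplies a complete and self-contained argument.
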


\begin{theorem}\label{upper20}
Let $T\in \mathcal{B}(\mathcal{H}).$ Then 
\begin{eqnarray*}
 dw^2(T) &\leq& \inf_{\lambda \in \mathbb{C}}\Big \{\Big (2 \left \|Re(\lambda)~Re(T)+Im(\lambda)~Im(T)\right \|   +\left \|T^*T-2Re(\overline{\lambda}T)\right \|\Big )^2 \\
 && + 2 \|Re(\overline{\lambda}T)\|- |\lambda|^2+w^2(T-\lambda I) \Big \}.
\end{eqnarray*}
In particular, $dw(T) \leq \sqrt{w^2(T) + \|T\|^4}$.
\end{theorem}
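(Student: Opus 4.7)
The plan is to apply Lemma \ref{lem18} with $x$ replaced by $Tx$, $y$ replaced by $x$ (where $x \in \mathcal{H}$ is an arbitrary unit vector), and a free parameter $\lambda \in \mathbb{C}$. Setting $S_\lambda = T - \lambda I$, the identity rearranges to
\[
|\langle Tx, x\rangle|^2 = \|Tx\|^2 - \|S_\lambda x\|^2 + |\langle S_\lambda x, x\rangle|^2.
\]
Two observations then reduce this to the desired form: first, $|\langle S_\lambda x, x\rangle|^2 \le w^2(T - \lambda I)$; and second, direct expansion gives $\|Tx\|^2 - \|S_\lambda x\|^2 = 2\langle \mathrm{Re}(\bar\lambda T)x, x\rangle - |\lambda|^2$, which is at most $2\|\mathrm{Re}(\bar\lambda T)\| - |\lambda|^2$ since $\mathrm{Re}(\bar\lambda T)$ is selfadjoint.

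For the $\|Tx\|^4$ term I would decompose $T^*T = \bigl(T^*T - 2\mathrm{Re}(\bar\lambda T)\bigr) + 2\mathrm{Re}(\bar\lambda T)$ and bound
\[
\|Tx\|^2 = \langle T^*T x, x\rangle \le \|T^*T - 2\mathrm{Re}(\bar\lambda T)\| + 2\|\mathrm{Re}(\bar\lambda T)\|,
\]
then square to obtain exactly the parenthesised term of the theorem. The elementary identity $\mathrm{Re}(\bar\lambda T) = \mathrm{Re}(\lambda)\mathrm{Re}(T) + \mathrm{Im}(\lambda)\mathrm{Im}(T)$ (obtained by writing $\lambda = \mathrm{Re}(\lambda) + i\,\mathrm{Im}(\lambda)$ and $T = \mathrm{Re}(T) + i\,\mathrm{Im}(T)$ and computing $\tfrac{1}{2}(\bar\lambda T + \lambda T^*)$) then rewrites $\|\mathrm{Re}(\bar\lambda T)\|$ in the form appearing inside the squared bracket of the statement.

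Adding the two pointwise bounds, taking the supremum over unit vectors $x \in \mathcal{H}$, and finally the infimum over $\lambda \in \mathbb{C}$ yields the asserted inequality. For the ``in particular'' clause, specialise to $\lambda = 0$: both occurrences of $\mathrm{Re}(\bar\lambda T)$ drop out, $|\lambda|^2 = 0$, the norm $\|T^*T - 2\mathrm{Re}(\bar\lambda T)\|$ collapses to $\|T\|^2$, and $w(T - \lambda I) = w(T)$, producing $dw^2(T) \le \|T\|^4 + w^2(T)$. I do not expect a real conceptual obstacle; the only care needed is the bookkeeping remark that the estimate for $|\langle Tx, x\rangle|^2$ and the estimate for $\|Tx\|^4$ are applied to the two summands of $|\langle Tx,x\rangle|^2 + \|Tx\|^4$ \emph{separately}, so the $2\langle \mathrm{Re}(\bar\lambda T)x, x\rangle$ contribution that appears in each is not double-counted.
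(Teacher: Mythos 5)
Your proposal is correct and follows essentially the same route as the paper: Lemma \ref{lem18} applied to the pair $(Tx,x)$ for the $|\langle Tx,x\rangle|^2$ term, the bound $\|Tx\|^2\le 2\|Re(\overline{\lambda}T)\|+\|T^*T-2Re(\overline{\lambda}T)\|$ squared for the $\|Tx\|^4$ term, and the identity $Re(\overline{\lambda}T)=Re(\lambda)Re(T)+Im(\lambda)Im(T)$. The only cosmetic difference is that the paper reaches the intermediate identity $\|Tx\|^2=2\langle Re(\overline{\lambda}T)x,x\rangle+\langle(T^*T-2Re(\overline{\lambda}T))x,x\rangle$ via a longer Cartesian-decomposition computation, whereas you obtain it by the direct splitting $T^*T=(T^*T-2Re(\overline{\lambda}T))+2Re(\overline{\lambda}T)$; the resulting estimates are identical.
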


\begin{proof}
Let $x\in \mathcal{H}$ with $\|x\|=1.$ Let $\lambda \in \mathbb{C}$. By using Lemma \ref{lem18}, we have 
\begin{eqnarray*}
\|Tx\|^2||x||^2-|\langle Tx,x \rangle|^2 &=& \|Tx-\lambda x\|^2\|x\|^2-|\langle Tx-\lambda x ,x \rangle|^2.
\end{eqnarray*}
Using the Cartesian decomposition of $T$, i.e.,  $T=Re(T)+{\rm i}~Im(T)$, we get
\begin{eqnarray*}
 \|Tx\|^2 &=& \left (\langle Re(T)x,x \rangle \right)^2 -\left (\langle Re(T-\lambda I)x,x \rangle \right )^2+ \left (\langle Im(T)x,x \rangle \right)^2\\
&& - \left(\langle Im(T-\lambda I)x,x \rangle \right)^2+\|Tx-\lambda x\|^2\\
&=& \langle (2Re(T)-Re(\lambda)I)x,x\rangle \langle Re(\lambda) x,x \rangle \\
&&+\langle ( 2Im(T)-Im(\lambda)I)x,x\rangle \langle Im(\lambda) x,x \rangle +\|Tx-\lambda x\|^2\\
&=& 2Re(\lambda) \langle Re(T)x,x \rangle +2Im(\lambda) \langle Im(T)x,x \rangle \\
&&- (Re(\lambda))^2-(Im(\lambda))^2+\|Tx-\lambda x\|^2\\
&=& 2 \left (Re(\lambda)\langle Re(T)x,x \rangle+Im(\lambda) \langle Im(T)x,x \rangle \right)-|\lambda|^2\\
&& + \left \langle Tx-\lambda x ,Tx-\lambda x \right \rangle\\
&=& 2 \left (Re(\lambda)\langle Re(T)x,x \rangle+Im(\lambda) \langle Im(T)x,x \rangle \right)\\
&& + \left \langle (T^*T-2Re(\overline{\lambda}T))x,x \right \rangle\\
&\leq& 2 \left \|Re(\lambda)~Re(T)+Im(\lambda)~Im(T)\right \|   +\left \|T^*T-2Re(\overline{\lambda}T)\right \|.
\end{eqnarray*}
Again by using Lemma \ref{lem18}, we get
 \begin{eqnarray*}
|\langle Tx,x \rangle|^2 &=& \|Tx\|^2-\|Tx-\lambda x\|^2+|\langle Tx-\lambda x ,x \rangle|^2\\
&=& 2\langle Re(\overline{\lambda}T)x,x \rangle - |\lambda|^2+|\langle Tx-\lambda x ,x \rangle|^2\\
&\leq& 2 \|Re(\overline{\lambda}T)\|- |\lambda|^2+w^2(T-\lambda I).
\end{eqnarray*}
Hence, 
\begin{eqnarray*}
&& |\langle Tx,x \rangle|^2+\|Tx\|^4\\ 
&&\leq 2 \|Re(\overline{\lambda}T)\|- |\lambda|^2+w^2(T-\lambda I)\\
&&+  \left (2 \left \|Re(\lambda)~Re(T)+Im(\lambda)~Im(T)\right \|   +\left \|T^*T-2Re(\overline{\lambda}T)\right \| \right )^2.
\end{eqnarray*}
Therefore, taking supremum over all unit vectors in $\mathcal{H}$, and infimum over all $\lambda \in \mathbb{C}$, we get
\begin{eqnarray*}
 dw^2(T) &\leq& \inf_{\lambda \in \mathbb{C}}\Big \{\left (2 \left \|Re(\lambda)~Re(T)+Im(\lambda)~Im(T)\right \|   +\left \|T^*T-2Re(\overline{\lambda}T)\right \|\right )^2 \\
 && + 2 \|Re(\overline{\lambda}T)\|- |\lambda|^2+w^2(T-\lambda I) \Big \}.
\end{eqnarray*}
Taking $\lambda =0$, we get $dw(T) \leq \sqrt{w^2(T) + \|T\|^4}.$
\end{proof}

In the following remark, we show that the inequality in Theorem \ref{upper20} improves on the existing inequalities in \cite{ZS} for some operators.

	\begin{remark} In \cite[Th. 2.2]{ZS}, Zamani and Shebrawi proved that 
	$$dw^2(T)\leq \frac{1}{2}w(|T|^2+2|T|^4+|T^*|^2)-\frac{1}{2} \inf_{\|x\|=1}(\|Tx\|-\|T^*x\|)^2.$$
	If we consider a matrix $T=\left(\begin{array}{cc}
	0 & 2\\
	0 & 0
	\end{array}\right)$ then Theorem \ref{upper20} gives $dw(T)\leq  4.123$, whereas \cite[Th. 2.1]{ZS} gives $dw(T)\leq 5.0935$, \cite[Th. 2.2]{ZS} gives $dw(T)\leq 4.2426$ and \cite[Th. 2.17]{ZS} gives $dw(T)\leq 4.6006$. 
	\end{remark}

We next  obtain an upper bound for the Davis-Wielandt radius of sum of two bounded linear operators. 

\begin{theorem}\label{lem24}
	Let $ S,T \in \mathcal{B}(\mathcal{H})$. Then
	\[dw(S+T)\leq dw(S)+dw(T)+w(S^*T+T^*S).\]
\end{theorem}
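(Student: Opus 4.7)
The plan is to work directly from the definition of $dw(\cdot)$ and bound the integrand pointwise on unit vectors. For a fixed unit vector $x$, set
\[
a = |\langle Sx,x\rangle|,\quad b = |\langle Tx,x\rangle|,\quad c = \|Sx\|^2,\quad d = \|Tx\|^2,\quad e = \langle (S^*T+T^*S)x,x\rangle.
\]
Note that $S^*T+T^*S$ is self-adjoint, so $e \in \mathbb{R}$. First I would expand
\[
\|(S+T)x\|^2 = \|Sx\|^2 + \|Tx\|^2 + 2\,\mathrm{Re}\,\langle Tx,Sx\rangle = c+d+e,
\]
and apply the ordinary triangle inequality $|\langle (S+T)x,x\rangle|\le a+b$. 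Combining these,
\[
|\langle (S+T)x,x\rangle|^2 + \|(S+T)x\|^4 \leq (a+b)^2 + (c+d+e)^2.
\]

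Next I would use the Euclidean triangle inequality in $\mathbb{R}^2$ twice. Since the Euclidean norm of a sum is at most the sum of the Euclidean norms, I have
\[
\sqrt{(a+b)^2 + (c+d+e)^2} \leq \sqrt{(a+b)^2 + (c+d)^2} + \sqrt{0 + e^2} = \sqrt{(a+b)^2+(c+d)^2} + |e|,
\]
and splitting $(a+b, c+d) = (a,c)+(b,d)$,
\[
\sqrt{(a+b)^2 + (c+d)^2} \leq \sqrt{a^2+c^2} + \sqrt{b^2+d^2}.
\]
Chaining these gives
\[
\sqrt{|\langle (S+T)x,x\rangle|^2 + \|(S+T)x\|^4} \leq \sqrt{a^2+c^2} + \sqrt{b^2+d^2} + |e|.
\]

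Finally, for each unit vector $x$, $\sqrt{a^2+c^2}\leq dw(S)$, $\sqrt{b^2+d^2}\leq dw(T)$, and $|e|\leq w(S^*T+T^*S)$ directly from the definitions. Taking the supremum over all unit vectors $x$ in $\mathcal{H}$ on the left yields the desired inequality $dw(S+T)\leq dw(S)+dw(T)+w(S^*T+T^*S)$.

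There is no real obstacle here: the only place to be careful is verifying that $e$ is real so that $(c+d+e)^2$ is a legitimate upper bound for $\|(S+T)x\|^4$ with $e$ absorbed into a genuine Euclidean coordinate, and that the two-dimensional triangle inequality is the right tool rather than something more delicate. The computation is otherwise a direct unfolding of definitions followed by two uses of $\|u+v\|_2 \leq \|u\|_2+\|v\|_2$ in $\mathbb{R}^2$.
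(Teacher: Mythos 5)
Your proof is correct and is essentially the paper's argument: the paper decomposes the Davis--Wielandt shell point of $S+T$ at $x$ as the sum of the shell points of $S$ and $T$ at $x$ plus $(0,\langle (S^*T+T^*S)x,x\rangle)$, obtaining the inclusion $DW(S+T)\subseteq DW(S)+DW(T)+A$, which is exactly your pointwise triangle-inequality computation phrased at the level of sets. The only cosmetic difference is that you first replace $\langle (S+T)x,x\rangle$ by $|\langle Sx,x\rangle|+|\langle Tx,x\rangle|$ and work in $\mathbb{R}^2$, while the paper keeps the first coordinate complex and works in $\mathbb{C}\times\mathbb{R}$.
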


\begin{proof}
	From the definition of the Davis-Wielandt shell, we have
	\begin{eqnarray*}
		DW(S+T) &=& \left \{ \Big (\left \langle (S+T)x,x \right \rangle , \left \langle (S+T)x,(S+T)x \right \rangle \Big) : x \in \mathcal{H},\|x\|=1 \right \}\\
		&=& \Big\{ \Big( \langle Sx,x \rangle , \langle Sx,Sx \rangle \Big)+ \Big( \langle Tx,x \rangle , \langle Tx,Tx \rangle \Big)\\
		&& +\Big(0,\langle (S^*T+T^*S)x ,x\rangle \Big) : x \in \mathcal{H},\|x\|=1 \Big \}.
		\end {eqnarray*}
		Hence, $DW(S+T)\subseteq DW(S)+DW(T)+A$, where
		\[A = \left \{ \left (0,\langle (S^*T+T^*S)x ,x\rangle \right): x \in \mathcal{H},\|x\|=1 \right \}. \]
		This implies the required inequality of the theorem.
	\end{proof}

	The following Corollary follows from Theorem \ref{lem24}. 
	
	\begin{cor}\label{prop25}
		Let $S,T \in \mathcal{B}(\mathcal{H})$ be such that $S^*T+T^*S=0$. Then 
		\[dw(S+T)\leq dw(S)+dw(T).\]
	\end{cor}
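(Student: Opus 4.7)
The proof will be essentially immediate from Theorem \ref{lem24}. That theorem furnishes, for arbitrary $S, T \in \mathcal{B}(\mathcal{H})$, the three-term bound
\[
dw(S+T) \le dw(S) + dw(T) + w(S^*T + T^*S).
\]
The plan is simply to substitute the hypothesis $S^*T + T^*S = 0$ into this bound. Since $w$ vanishes on the zero operator, the last term drops out and we are left with the claimed inequality $dw(S+T) \le dw(S) + dw(T)$.

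Conceptually, the condition $S^*T + T^*S = 0$ is precisely what kills the cross term in the expansion
\[
\|(S+T)x\|^2 = \|Sx\|^2 + \|Tx\|^2 + \langle (S^*T + T^*S)x, x\rangle,
\]
so the second coordinate of the Davis-Wielandt shell $DW(S+T)$ behaves additively with respect to the contributions from $S$ and $T$; the first coordinate is already additive by linearity of the inner product. Thus, under this orthogonality-like hypothesis, one has the set inclusion $DW(S+T) \subseteq DW(S) + DW(T)$, and taking suprema of $\sqrt{|z|^2 + r^2}$ over each side recovers the corollary.

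There is no real obstacle here, since all the work has been done in Theorem \ref{lem24}; the corollary is a one-line specialization. The only thing worth noting is that the hypothesis $S^*T + T^*S = 0$ is strictly stronger than what is needed for numerical radius subadditivity (where $\mathrm{Re}\langle T^*Sx,x\rangle = 0$ pointwise would suffice), reflecting the fact that $dw$ encodes both first- and second-moment information about $T$.
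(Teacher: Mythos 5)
Your proof is correct and is exactly the paper's argument: the corollary is stated as an immediate consequence of Theorem \ref{lem24}, obtained by setting $S^*T+T^*S=0$ so that the term $w(S^*T+T^*S)$ vanishes. Your additional remark about the shell inclusion $DW(S+T)\subseteq DW(S)+DW(T)$ is consistent with how the paper proves Theorem \ref{lem24} itself and adds nothing that conflicts with it.
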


Our next result in this section is the estimation of upper and lower bounds for the Davis-Wielandt radius of the shift operator on $\mathbb{C}^n$. 

\begin{theorem}\label{33}
Let $T$ be the right shift operator on $\mathbb{C}^n$ defined by $T=(t_{ij})_{n\times n}$, where 
$t_{ij}= \begin{cases}
1 &  j=i-1  \\
0 &  j \neq i-1.
\end{cases}$ 
Then $$\sqrt{\cos^2\left( \frac{\pi}{n}\right)+1} \leq dw(T) \leq \sqrt{\cos^2\left( \frac{\pi}{n+1}\right)+1}.$$
\end{theorem}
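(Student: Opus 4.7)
My approach would split into an upper and a lower bound, both leveraging the classical Haagerup--de la Harpe formula $w(T)=\cos(\pi/(n+1))$ for the numerical radius of the $n\times n$ nilpotent Jordan block, together with the obvious fact that $\|T\|=1$ (since $T$ acts as an isometry on $\mathrm{span}\{e_1,\dots,e_{n-1}\}$ and kills $e_n$). For the upper bound I would simply apply the second inequality in (\ref{1stbound}) to obtain
\[dw^2(T)\le w^2(T)+\|T\|^4 = \cos^2\!\bigl(\pi/(n+1)\bigr)+1,\]
which is exactly the claimed upper estimate.

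For the lower bound, the key idea is to exhibit a unit vector on which $\|Tx\|=1$ while $|\langle Tx,x\rangle|$ is as large as the numerical radius of the \emph{one-step-smaller} shift. Concretely, let $T'$ denote the right shift on $\mathbb{C}^{n-1}$, and pick a unit vector $x'=(x_1,\dots,x_{n-1})\in\mathbb{C}^{n-1}$ attaining $|\langle T'x',x'\rangle|=w(T')=\cos(\pi/n)$ (such an $x'$ exists by compactness in finite dimensions). Then set $x=(x_1,\dots,x_{n-1},0)\in\mathbb{C}^n$, which is still a unit vector. Because $x_n=0$, a direct coordinate computation gives
\[\|Tx\|^2=|x_1|^2+\cdots+|x_{n-1}|^2=1 \quad\text{and}\quad \langle Tx,x\rangle=\sum_{j=1}^{n-2}x_j\overline{x_{j+1}}=\langle T'x',x'\rangle.\]
Consequently
\[dw^2(T)\ge |\langle Tx,x\rangle|^2+\|Tx\|^4 = \cos^2(\pi/n)+1,\]
as required.

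The only non-trivial input is the formula $w(T)=\cos(\pi/(n+1))$; once this is cited, both bounds reduce to short verifications, and the lower bound is exactly the observation that killing the last coordinate of a vector in $\mathbb{C}^n$ turns $T$ into a padded copy of the shift on $\mathbb{C}^{n-1}$, so everything that $T'$ can do inside its numerical range is accessible to $T$ while simultaneously forcing $\|Tx\|=1$. The mismatch between $\cos(\pi/n)$ and $\cos(\pi/(n+1))$ explains the (unavoidable, in this argument) gap between the two sides. One could try to sharpen the lower bound by permitting a small nonzero $x_n$ and trading a slightly smaller $\|Tx\|$ against a possibly larger $|\langle Tx,x\rangle|$, but this delicate optimization is not needed for the stated inequality.
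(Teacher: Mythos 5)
Your proposal is correct and follows essentially the same route as the paper: the upper bound comes from the general estimate $dw(T)\le\sqrt{w^2(T)+\|T\|^4}$ with $\|T\|=1$ and $w(T)=\cos(\pi/(n+1))$, and the lower bound comes from restricting to unit vectors with vanishing last coordinate, which forces $\|Tx\|=1$ while reducing $\langle Tx,x\rangle$ to the numerical range of the shift on $\mathbb{C}^{n-1}$, whose radius is $\cos(\pi/n)$. The only cosmetic difference is that you phrase this as padding a norm-attaining vector of the smaller shift, whereas the paper optimizes the sum $f_1\overline{f_2}+\cdots+f_{n-2}\overline{f_{n-1}}$ directly (and contains a harmless typo writing $\cos^2(\pi/n)$ for that supremum where $\cos(\pi/n)$ is meant).
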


\begin{proof}
Clearly, $\|T\|=1$ and from \cite [p. 8]{GR}, we have  $w(T)=\cos (\frac{\pi}{n+1})$.
 Therefore, $dw(T) \leq \sqrt{w^2(T)+\|T\|^4}=\sqrt{\cos^2 (\frac{\pi}{n+1})+1}$. This is the second inequality. 

\noindent To prove the first inequality, let $f=(f_1,f_2,...,f_n) \in \mathbb{C}^n $ with $\|f\|=1$, i.e.,  $|f_1|^2+|f_2|^2+...+|f_n|^2=1$. Then we have,  $Tf=(0,f_1,f_2,...,f_{n-1}),
\langle Tf,f \rangle = f_1\overline{f_2}+f_2\overline{f_3}+...+f_{n-1}\overline{f_n} $ and 
$\langle Tf,Tf \rangle = |f_1|^2+|f_2|^2+...+|f_{n-1}|^2.$
In particular, if we choose $f_n=0$, then we get $\langle Tf,Tf \rangle=1$ and  $|\langle Tf,f \rangle| = |f_1\overline{f_2}+f_2\overline{f_3}+...+f_{n-2}\overline{f_{n-1}}|$. Also, $\sup_{\|f\|=1}\left \{|f_1\overline{f_2}+f_2\overline{f_3}+...+f_{n-2}\overline{f_{n-1}}| \right \}=\cos^2( \frac{\pi}{n}),$ (see \cite [p. 8]{GR}). Therefore,
\[ dw(T) \geq \sup_{\|f\|=1}\sqrt{|\langle Tf,f\rangle|^2+\|Tf\|^4 }= \sqrt{\cos^2 \left(\frac{\pi}{n}\right)+1}.\]
This is the first inequality of the theorem and it completes the proof.
\end{proof}

\begin{remark} 1. If we consider the left shift operator $T$ on $\mathbb{C}^n$ defined by $T=(t_{ij})_{n\times n}$, where 
$t_{ij}= \begin{cases}
1 &  j=i+1  \\
0 &  j \neq i+1,
\end{cases}$ then similarly as in Theorem \ref{33},	we can prove that 
 $$\sqrt{\cos^2\left( \frac{\pi}{n}\right)+1} \leq dw(T) \leq \sqrt{\cos^2\left( \frac{\pi}{n+1}\right)+1}.$$
 2. If $T$ is a shift operator on the Hilbert space $ \ell_2$  then $T$ is normaloid and so  $ dw(T) = \sqrt{2}.$
\end{remark}

\noindent \textbf{Comparabilty of bounds}\\
	
 Before we end this section we would like to discuss about the comparability of the bounds for the Davis-Wielandt radius obtained by us. We begin with the following observations:
\begin{itemize}
\item If $c(T)=0$ or $m(T)=0$ then the second inequality in Theorem \ref{th-upperlower4} gives better bound than the inequality in Theorem \ref{th-upper2}.
\item  If $T^2=0$ then the inequality in Corollary \ref{upper8} (ii) gives better bound than the inequality in Theorem \ref{upper15} (ii).
\item  If $T$ is normaloid, i.e., if $w(T)=\|T\|$ then the inequalities in Corollary \ref{upper8} (ii), Theorem \ref{upper15} (i), Theorem \ref{upper15} (ii) and Theorem \ref{upper20} give equalities and so the upper bounds obtained in those theorems are better than that obtained  in Theorem \ref{th-upper2} and Theorem \ref{th-upperlower4} for a normaloid operator.
\end{itemize}
Next, we  exhibit numerical examples to  illustrate that the upper bounds for the Davis-Wielandt radius obtained by us  are, in general,  incomparable. We cite a few of such examples.

\begin{itemize}
\item 	Let $T_1=\left(\begin{array}{cc}
	0 & 1\\
	2 & 0
	\end{array}\right).$  Then the upper bound obtained in Theorem \ref{th-upperlower4} is  21.357, whereas that in Theorem \ref{upper15} (ii) is 19. On the other hand, if we consider  $T_2=\left(\begin{array}{cc}
	0 & 2\\
	0 & 0
	\end{array}\right),$ then  the upper bound obtained in Theorem \ref{upper15} (ii) is 18, whereas that in Theorem \ref{th-upperlower4} is 17.944.
	
\item  Let $T_3=\left(\begin{array}{cc}
1 & 1\\
0 & 0
\end{array}\right).$  Then the upper bound obtained in  Corollary \ref{upper8} (ii) is 5.5495, whereas that obtained in  Theorem \ref{upper15} (ii) is 5.6.   On the other hand, if we consider  $T_4=\left(\begin{array}{cc}
1 & 1\\
0 & 1
\end{array}\right),$  then the upper bound obtained in  Corollary \ref{upper8} (ii) is 9.272, whereas that obtained in  Theorem \ref{upper15} (ii) is 9.162. 

\item The following self-explanatory table further illustrates the incomparability of the bounds for the Davis-Wielandt radius obtained by us. 

\begin{center}
	\begin{tabular} {|l|c|c|c|c|}
		\hline
		 & \multicolumn{4}{|c|}{Upper bound for }       \\
		 \hline
		&  $dw^2(T_1)$ &  $dw^2(T_2)$ &  $dw^2(T_3)$ &   $dw^2(T_4)$\\
		\hline
		\hline
		Th. \ref{th-upperlower4} & 21.357& 17.944 & 5.4753  & 9.056      \\
		\hline
		Cor. \ref{upper8} (ii) & 18.5& 18   & 5.5495  & 9.272  \\
		\hline
		Th. \ref{upper15} (i) & 20& 20  & 6  & 9.472\\
		\hline
		Th. \ref{upper15} (ii) & 19& 18 & 5.6 & 9.162\\
		\hline
	 Th. \ref{upper20} & 18.25& 17 & 5.4568  & 9.104     \\
		\hline
	\end{tabular}
\end{center}
\end{itemize}

\section{\textbf{Davis-Wielandt radius inequalities of operator matrices}}

\smallskip 

\noindent In this section we obtain some estimations for the Davis-Wielandt radius of $2\times 2$ operator matrices. We determine the exact value for the Davis-Wielandt radius of  operator matrices of the form $\left(\begin{array}{cc}
	I & B\\
	0 & 0
	\end{array}\right)$ and $\left(\begin{array}{cc}
	0 & A\\
	B & 0
	\end{array}\right)$.  We also obtain bounds for the Davis-Wielandt radius of $\left(\begin{array}{cc}
	A & B\\
	0 & C
	\end{array}\right)$. To achieve our goal, we need the following lemmas. First lemma follows from $DW(U^*TU)=DW(T)$ for every unitary operator $U \in \mathcal{B}(\mathcal{H}).$

	\begin{lemma}\label{lem22}
	Let $T \in \mathcal{B}(\mathcal{H})$. Then for every unitary operator $U \in \mathcal{B}(\mathcal{H}),$ we have
	$$dw(U^*TU)=dw(T).$$
	\end{lemma}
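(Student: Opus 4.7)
The plan is to reduce the lemma to the identity $DW(U^*TU) = DW(T)$, which in turn follows from a change-of-variables argument using the fact that a unitary operator preserves inner products and norms.

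First I would unpack the definition of the Davis-Wielandt shell for $U^*TU$:
\begin{equation*}
DW(U^*TU) = \bigl\{ \bigl( \langle U^*TUx, x\rangle, \|U^*TUx\|^2\bigr) : x\in\mathcal{H},\ \|x\|=1\bigr\}.
\end{equation*}
For the first coordinate I would rewrite $\langle U^*TUx,x\rangle = \langle TUx, Ux\rangle$ by moving $U^*$ across the inner product. For the second coordinate I would compute
\begin{equation*}
\|U^*TUx\|^2 = \langle U^*TUx, U^*TUx\rangle = \langle TUx, UU^*TUx\rangle = \|TUx\|^2,
\end{equation*}
using $UU^* = I$.

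Next I would perform the substitution $y = Ux$. Since $U$ is unitary, the map $x \mapsto Ux$ is a bijection of the unit sphere of $\mathcal{H}$ onto itself (it is surjective because $U$ has range $\mathcal{H}$, and preserves the norm so sends unit vectors to unit vectors). Hence as $x$ ranges over all unit vectors, so does $y = Ux$, giving
\begin{equation*}
DW(U^*TU) = \bigl\{ (\langle Ty, y\rangle, \|Ty\|^2) : y\in\mathcal{H},\ \|y\|=1\bigr\} = DW(T).
\end{equation*}

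Finally, since $dw(S) = \sup\{\sqrt{|\alpha|^2 + \beta^2} : (\alpha,\beta)\in DW(S)\}$ is a functional depending only on the set $DW(S)$, the equality of the two shells immediately gives $dw(U^*TU) = dw(T)$. There is no real obstacle here: the whole argument is a routine unitary invariance calculation, and the only thing to be careful about is the bijectivity of $x\mapsto Ux$ on the unit sphere, which is where the unitarity of $U$ (as opposed to mere isometry) is used.
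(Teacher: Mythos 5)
Your proposal is correct and follows exactly the route the paper indicates: the paper states that the lemma "follows from $DW(U^*TU)=DW(T)$," and your change-of-variables argument $y=Ux$ on the unit sphere is precisely the routine verification of that shell identity, from which the equality of the radii is immediate.
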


\begin{lemma}\label{lem23}
	Let $A,B \in \mathcal{B}(\mathcal{H})$. Then 
\begin{enumerate}[label=$(\alph*)$]
\item $dw\left(\begin{array}{cc}
	0 & A\\
	e^{{\rm i} \theta}B & 0
	\end{array}\right)=dw\left(\begin{array}{cc}
	0 & A\\
	B & 0
	\end{array}\right)$, for every $\theta \in \mathbb{R}.$
	\item $dw\left(\begin{array}{cc}
	0 & A\\
	B & 0
	\end{array}\right)=dw\left(\begin{array}{cc}
	0 & B\\
	A & 0
	\end{array}\right).$ 
	\item $dw\left(\begin{array}{cc}
	A & B\\
	B & A
	\end{array}\right)=dw\left(\begin{array}{cc}
	A-B & 0\\
	0 & A+B
	\end{array}\right).$
	\item $dw\left(\begin{array}{cc}
	A & 0\\
	0 & B
	\end{array}\right)=dw\left(\begin{array}{cc}
	B & 0\\
	0 & A
	\end{array}\right).$ 
	\end{enumerate}
	\end{lemma}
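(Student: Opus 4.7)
\noindent\textbf{Proof proposal for Lemma \ref{lem23}.}

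My plan is to derive all four identities as direct consequences of Lemma \ref{lem22} (invariance of $dw$ under unitary similarity), together with the obvious observation that $dw(e^{i\varphi}T)=dw(T)$ for every real $\varphi$, since $|\langle e^{i\varphi}Tx,x\rangle|^2+\|e^{i\varphi}Tx\|^4=|\langle Tx,x\rangle|^2+\|Tx\|^4$. For each part I just have to exhibit a suitable unitary $U\in\mathcal{B}(\mathcal{H}\oplus\mathcal{H})$ of block-diagonal or block-anti-diagonal form. The verification that $U$ is unitary and the block multiplication are straightforward.

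For (b) and (d), the natural choice is the swap unitary $U=\left(\begin{smallmatrix} 0 & I \\ I & 0\end{smallmatrix}\right)$, which is self-adjoint and unitary. A direct $2\times 2$ block computation gives $U^*\left(\begin{smallmatrix}0 & A\\ B & 0\end{smallmatrix}\right)U=\left(\begin{smallmatrix}0 & B\\ A & 0\end{smallmatrix}\right)$ and $U^*\left(\begin{smallmatrix}A & 0\\ 0 & B\end{smallmatrix}\right)U=\left(\begin{smallmatrix}B & 0\\ 0 & A\end{smallmatrix}\right)$, so both identities follow from Lemma \ref{lem22}.

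For (a), I would take the diagonal unitary $U=\left(\begin{smallmatrix}e^{-i\theta/4}I & 0\\ 0 & e^{i\theta/4}I\end{smallmatrix}\right)$. A brief block calculation yields
\[
U^*\left(\begin{array}{cc}0 & A\\ e^{i\theta}B & 0\end{array}\right)U
= e^{i\theta/2}\left(\begin{array}{cc}0 & A\\ B & 0\end{array}\right),
\]
and then combining Lemma \ref{lem22} with the rotation-invariance $dw(e^{i\theta/2}T)=dw(T)$ gives the claim.

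For (c), I would use the orthogonal-type unitary $U=\tfrac{1}{\sqrt{2}}\left(\begin{smallmatrix}I & I\\ -I & I\end{smallmatrix}\right)$; a quick check shows $U^*U=I$. Direct block multiplication gives
\[
U^*\left(\begin{array}{cc}A & B\\ B & A\end{array}\right)U=\left(\begin{array}{cc}A-B & 0\\ 0 & A+B\end{array}\right),
\]
and Lemma \ref{lem22} finishes the proof. I do not foresee any real obstacle here: the whole lemma is a bookkeeping exercise in selecting unitaries, and the only point requiring minor care is part (a), where one must remember to absorb the leftover scalar factor $e^{i\theta/2}$ using rotation-invariance of $dw$.
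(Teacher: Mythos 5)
Your proposal is correct and follows essentially the same route as the paper: unitary invariance of $dw$ (Lemma \ref{lem22}) with the swap unitary for (b) and (d), the Hadamard-type unitary $\tfrac{1}{\sqrt 2}\left(\begin{smallmatrix} I & I\\ -I & I\end{smallmatrix}\right)$ for (c), and a diagonal phase unitary for (a), the only cosmetic difference being that you symmetrize the phases as $e^{\mp i\theta/4}$ while the paper uses $\mathrm{diag}(I, e^{i\theta/2}I)$, both landing on $e^{i\theta/2}\left(\begin{smallmatrix}0 & A\\ B & 0\end{smallmatrix}\right)$. You also make explicit the rotation invariance $dw(e^{i\varphi}T)=dw(T)$ needed to absorb that scalar, which the paper uses silently.
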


\begin{proof}
\begin{enumerate}[label=(\alph*)]
\item Let $U=\left(\begin{array}{cc}
	I & 0\\
	0 & e^{{\rm i}\frac{\theta}{2}}I
	\end{array}\right)$. Then by using Lemma \ref{lem22}, we get\\
	$dw\left(\begin{array}{cc}
	0 & A\\
	e^{{\rm i} \theta}B & 0
	\end{array}\right)=dw\left(U^* \left(\begin{array}{cc}
	0 & A\\
	e^{{\rm i}\theta}B & 0
	\end{array} \right) U\right)=dw\left(\begin{array}{cc}
	0 & e^{{\rm i}\frac{\theta}{2}} A\\
	e^{{\rm i}\frac{\theta}{2}} B & 0
	\end{array}\right) = dw\left(\begin{array}{cc}
	0 & A\\
	B & 0
	\end{array}\right).$
	
\item Let $U=\left(\begin{array}{cc}
	0 & I\\
	I & 0
	\end{array}\right)$. Then by using Lemma \ref{lem22}, we get (b).
	
	\item Let $U=\frac{1}{\sqrt{2}}\left(\begin{array}{cc}
	I & I\\
	-I & I
	\end{array}\right)$. Then by using Lemma \ref{lem22}, we get (C). 
	
	\item Let $U=\left(\begin{array}{cc}
	0 & I\\
	I & 0
	\end{array}\right)$. Then by using Lemma \ref{lem22}, we get (d).
\end{enumerate}
\end{proof}

\begin{lemma}\label{33}
Let $A,B \in \mathcal{B}(\mathcal{H}).$ Then
$$dw\left(\begin{array}{cc}
	A & 0\\
	0 & B
	\end{array}\right) = \max \Big\{dw(A),dw(B) \Big\}.$$
\end{lemma}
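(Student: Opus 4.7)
The plan is to split the argument into the two standard directions and exploit the block-diagonal structure on $\mathcal{H} \oplus \mathcal{H}$. Write $T = \begin{pmatrix} A & 0 \\ 0 & B \end{pmatrix}$ acting on $\mathcal{H}\oplus\mathcal{H}$, and note that any unit vector there has the form $(x_1, x_2)$ with $\|x_1\|^2 + \|x_2\|^2 = 1$, with $T(x_1,x_2) = (Ax_1, Bx_2)$.

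For the lower bound $dw(T) \geq \max\{dw(A), dw(B)\}$, I would test $dw$ against unit vectors of the form $(u,0)$ and $(0,v)$. Substituting $(u,0)$ with $\|u\| = 1$ immediately gives $|\langle T(u,0),(u,0)\rangle|^2 + \|T(u,0)\|^4 = |\langle Au,u\rangle|^2 + \|Au\|^4$, and taking the supremum over $u$ yields $dw^2(T) \geq dw^2(A)$; the symmetric choice $(0,v)$ yields $dw^2(T) \geq dw^2(B)$.

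For the upper bound, I would parametrize $x_1 = \alpha u_1$ and $x_2 = \beta u_2$ with $\|u_i\|=1$ and $\alpha^2 + \beta^2 = 1$ (handling $\alpha$ or $\beta = 0$ as a trivial case). Setting $a = |\langle Au_1, u_1\rangle|$, $b = |\langle Bu_2, u_2\rangle|$, $p = \|Au_1\|^2$, $q = \|Bu_2\|^2$, and using the triangle inequality,
\[
|\langle T(x_1,x_2), (x_1,x_2)\rangle|^2 + \|T(x_1,x_2)\|^4 \leq (\alpha^2 a + \beta^2 b)^2 + (\alpha^2 p + \beta^2 q)^2.
\]
Expanding the right-hand side with $s = \alpha^2$ produces $s^2(a^2+p^2) + (1-s)^2(b^2+q^2) + 2s(1-s)(ab+pq)$. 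The key step, which I expect to be the only substantive point, is to control the cross term: by the Cauchy--Schwarz inequality in $\mathbb{R}^2$,
\[
ab + pq \leq \sqrt{a^2+p^2}\,\sqrt{b^2+q^2} \leq \max\{a^2+p^2,\, b^2+q^2\},
\]
the last step using $\sqrt{xy}\leq \max\{x,y\}$ for $x,y\geq 0$. Writing $M := \max\{a^2+p^2, b^2+q^2\}$, the expression collapses to $M(s + (1-s))^2 = M$. Since $M \leq \max\{dw^2(A), dw^2(B)\}$, taking the supremum over unit vectors in $\mathcal{H}\oplus\mathcal{H}$ yields $dw^2(T) \leq \max\{dw^2(A), dw^2(B)\}$, which combined with the lower bound closes the proof.
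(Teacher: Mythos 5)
Your proposal is correct and follows essentially the same route as the paper: the lower bound via unit vectors of the form $(u,0)$ and $(0,v)$, and the upper bound via the triangle inequality followed by a convex-combination estimate. The only cosmetic difference is that the paper invokes Minkowski's inequality in $\mathbb{R}^2$ to get $(a+b)^2+(p+q)^2\leq\left(\sqrt{a^2+p^2}+\sqrt{b^2+q^2}\right)^2$ and then bounds $dw(A)\|x\|^2+dw(B)\|y\|^2$ by the maximum, whereas you expand the squares and control the cross term by Cauchy--Schwarz, which is exactly the standard proof of that Minkowski step.
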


\begin{proof}
Let $T=\left(\begin{array}{cc}
	A & 0\\
	0 & B
	\end{array}\right).$ Let $x$ be a unit vector in $\mathcal{H}$ and let $\tilde{x} =\left(\begin{array}{c}
	x \\
	0 
	\end{array}\right) \in \mathcal{H} \oplus \mathcal{H}. $ Clearly $\|\tilde{x}\|=1$, therefore we have 
	$$ |\langle Ax,x \rangle|^2+ \|Ax\|^4 =  |\langle T\tilde{x},\tilde{x} \rangle|^2+ \|T\tilde{x}\|^4 \leq dw^2(T) .$$
	Taking supremum over all unit vectors in $\mathcal{H}$, we get, $dw^2(A) \leq dw^2(T).$
	Similarly, we can prove that, $dw^2(B) \leq dw^2(T).$
	Combining above two inequalities, we get $$\max \{dw(A),dw(B) \} \leq dw(T).$$ 
	To complete the proof of the lemma, we only need to show, $ dw(T) \leq \max \{dw(A),dw(B) \} $. Let  $z=\left(\begin{array}{cc}
	x \\
	y 
	\end{array}\right) \in \mathcal{H}\oplus \mathcal{H} $ be such that $\|z\|=1$, i.e., $\|x\|^2+\|y\|^2=1$.	Then
\begin{eqnarray*}
|\langle Tz,z \rangle|^2+\|Tz\|^4 &=& |\langle Ax,x \rangle+\langle By,y \rangle|^2 +(\|Ax\|^2+\|By\|^2)^2\\
&\leq& (|\langle Ax,x \rangle|+|\langle By,y \rangle|)^2 +(\|Ax\|^2+\|By\|^2)^2\\
&\leq& \left(\sqrt{|\langle Ax,x \rangle|^2+\|Ax\|^4}+\sqrt{|\langle By,y \rangle|^2+\|By\|^4}\right)^2,\\
&& \,\,\,\,\,\,\,\,\,\,\,\,\,\,\,\,\,\,\,\,\,\,\,\,\,\,\,\,\,\,\,\,\,\,\,\,\,\,\,\,\,\,\,\,\,\,\,\,\,\,\,\,\,\,\,\,\,\,\,\,\,\,\,\,\,\,\,\,\,\,\,\,\,\,\,\,\,\,\,\,\,~~\mbox{by Minkowski inequality}\\
&\leq& \left( dw(A) \|x\|^2 + dw(B)\|y\|^2 \right)^2\\
&\leq&  \max\{dw^2(A),dw^2(B) \}.
\end{eqnarray*}	
Taking supremum over all unit vectors in $\mathcal{H} \oplus \mathcal{H}$, we get $$dw^2(T) \leq \max\{dw^2(A),dw^2(B) \},~~ \mbox{i.e.,}~~dw(T) \leq \max \{dw(A),dw(B) \}.$$
\end{proof}

Now using Lemma \ref{lem23} $(a),(c)$ and Lemma \ref{33}, we have the following proposition:

\begin{prop}\label{eql}
Let $B \in \mathcal{B}(\mathcal{H})$ and let $\theta \in \mathbb{R}.$ Then 
$$dw\left(\begin{array}{cc}
	0 & B\\
	e^{{\rm i} \theta}B & 0
	\end{array}\right)=dw(B).$$
\end{prop}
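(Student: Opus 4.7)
The plan is to chain together the three cited lemmas in a short computation. First I would apply Lemma \ref{lem23}(a) with $A$ replaced by $B$ to eliminate the phase factor, obtaining
\[
dw\left(\begin{array}{cc} 0 & B \\ e^{{\rm i}\theta}B & 0 \end{array}\right) = dw\left(\begin{array}{cc} 0 & B \\ B & 0 \end{array}\right).
\]
This reduces the problem to a real (phase-free) off-diagonal matrix whose two blocks coincide.

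Next I would invoke Lemma \ref{lem23}(c) with the specialization $A=0$ (and the block $B$ in both off-diagonal positions) to diagonalize the resulting matrix:
\[
dw\left(\begin{array}{cc} 0 & B \\ B & 0 \end{array}\right) = dw\left(\begin{array}{cc} -B & 0 \\ 0 & B \end{array}\right).
\]
Finally, Lemma \ref{33} evaluates the Davis-Wielandt radius of a block-diagonal operator as the maximum of the Davis-Wielandt radii of the diagonal blocks, so
\[
dw\left(\begin{array}{cc} -B & 0 \\ 0 & B \end{array}\right) = \max\{dw(-B),dw(B)\} = dw(B),
\]
where the last equality follows from the trivial observation that $dw(-B)=dw(B)$, since $|\langle -Bx,x\rangle|^2 = |\langle Bx,x\rangle|^2$ and $\|-Bx\|^4 = \|Bx\|^4$ for every unit vector $x$.

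There is essentially no obstacle here; the result is a direct three-line consequence of the lemmas already proved. The only thing to be careful about is the correct substitution in Lemma \ref{lem23}(c), where one must take the diagonal blocks to be zero and the off-diagonal blocks to be $B$, yielding $A-B=-B$ and $A+B=B$ on the diagonal of the resulting matrix.
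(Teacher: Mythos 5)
Your proof is correct and follows exactly the route the paper indicates: the paper derives this proposition by citing Lemma \ref{lem23}(a), Lemma \ref{lem23}(c) (with the diagonal blocks zero), and Lemma \ref{33}, which is precisely your three-step chain. The only detail the paper leaves implicit, the identity $dw(-B)=dw(B)$, you have justified correctly.
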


In the next two theorems, we compute the  exact value of the Davis-Wielandt radius for two special type of operator matrices $\left(\begin{array}{cc}
I & B\\
0 & 0
\end{array}\right)$ and $\left(\begin{array}{cc}
0 & B\\
0 & 0
\end{array}\right)$, where $B\in \mathcal{B}(\mathcal{H})$.

\begin{theorem}\label{31}
Let $B \in \mathcal{B}(\mathcal{H})$ and  $T =\left(\begin{array}{cc}
	I & B\\
	0 & 0
	\end{array}\right) \in \mathcal{B}(\mathcal{H} \oplus \mathcal{H})$. Then 
	
		\[ dw\left(\begin{array}{cc}
	I & B\\
	0 & 0
	\end{array}\right)=\begin{cases}
	\sqrt{2}, &B=0 \\
	(\cos \theta_0 + \|B\| \sin \theta_0)(\cos^2 \theta_0 +(\cos \theta_0 + \|B\|\sin \theta_0)^2)^\frac{1}{2}, & B \neq 0,\\
	\end{cases} \]	
where $b=\|B\|$, $p=-\frac{2b^2-5}{2b},$ $q =- \frac{2b^2-2}{b^2},$ $r= -\frac{3}{2b},$ $s= \frac{1}{2^43^3b^6}(8b^8+20b^6+45b^4+61b^2+28),$ $\alpha=\frac{1}{27}(2p^3-9pq+27r),$ $\beta =(-\frac{\alpha}{2}+\sqrt{s})^\frac{1}{3},$ $\gamma=(-\frac{\alpha}{2}-\sqrt{s})^\frac{1}{3}$ and $\theta_0 = \tan^{-1}(\beta+\gamma-\frac{p}{3}).$

\end{theorem}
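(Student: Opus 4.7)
My plan is to reduce the Davis--Wielandt radius computation to a single-variable optimization over an angle $\theta$, derive a critical-point cubic in $t=\tan\theta$, and recover the stated closed form via Cardano's formula. Any unit vector in $\mathcal{H}\oplus\mathcal{H}$ can be written as $z=x\oplus y$ with $\|x\|^{2}+\|y\|^{2}=1$; then $Tz=(x+By)\oplus 0$, so that $\langle Tz,z\rangle=\|x\|^{2}+\langle By,x\rangle$ and $\|Tz\|^{2}=\|x+By\|^{2}$. The case $B=0$ is immediate from the block-diagonal identity $dw(A\oplus C)=\max\{dw(A),dw(C)\}$ proved earlier in this section: $T=I\oplus 0$ gives $dw(T)=\max\{dw(I),0\}=\sqrt{2}$.

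For $B\neq 0$, I would first fix $r=\|x\|$ and $y$, and optimize the objective $|\langle Tz,z\rangle|^{2}+\|Tz\|^{4}$ over the direction of $x$. Setting $u=\langle By,x\rangle$ (with $|u|\leq r\|By\|$ by Cauchy--Schwarz), the objective becomes $|r^{2}+u|^{2}+(r^{2}+2\,\mathrm{Re}(u)+\|By\|^{2})^{2}$, and a short calculus argument shows the maximum on the disk $|u|\leq r\|By\|$ is attained at the boundary point $u=r\|By\|$, i.e., when $x$ is a positive scalar multiple of $By$. This yields
\[
|\langle Tz,z\rangle|^{2}+\|Tz\|^{4}=(r+\|By\|)^{2}\bigl[r^{2}+(r+\|By\|)^{2}\bigr].
\]
Since this is monotonically increasing in $\|By\|$ and $\sup_{\|y\|=s}\|By\|=sb$ with $b=\|B\|$, setting $r=\cos\theta$ and $s=\sin\theta$ gives
\[
dw^{2}(T)=\sup_{\theta\in[0,\pi/2]}\bigl(\cos\theta+b\sin\theta\bigr)^{2}\bigl[\cos^{2}\theta+(\cos\theta+b\sin\theta)^{2}\bigr].
\]

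I would then differentiate this one-variable function and set the derivative to zero. After dividing through by $\cos^{3}\theta$ and substituting $t=\tan\theta$, the critical-point equation reduces to $(b-t)(1+2(1+bt)^{2})=t(1+bt)$, which expands to the cubic $t^{3}+pt^{2}+qt+r=0$ with exactly the $p,q,r$ stated in the theorem. Applying Cardano's formula via the depression $t=y-p/3$ produces a depressed cubic $y^{3}+Py+Q=0$ with $Q=\alpha$; the discriminant $(Q/2)^{2}+(P/3)^{3}$ simplifies after algebra to the quantity $s$ in the statement, and the Cardano root $y=\beta+\gamma$ yields $\tan\theta_{0}=\beta+\gamma-p/3$. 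Taking the square root of the optimal value $g(\theta_{0})$ and using $\cos\theta_{0}+b\sin\theta_{0}\geq 0$ on $[0,\pi/2]$ delivers the stated closed form for $dw(T)$.

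The principal obstacles I anticipate are twofold: first, one must verify that the Cardano root $\beta+\gamma-p/3$ does correspond to the global maximizer of $g$ on $(0,\pi/2)$, rather than to another critical point or to the endpoints $\theta=0$ (where $g=2$) and $\theta=\pi/2$ (where $g=b^{4}$); second, the purely algebraic check that $(Q/2)^{2}+(P/3)^{3}$ simplifies to the explicit closed-form discriminant $s=(8b^{8}+20b^{6}+45b^{4}+61b^{2}+28)/(2^{4}\cdot 3^{3}\,b^{6})$ appearing in the theorem is elementary but tedious and must be carried out carefully.
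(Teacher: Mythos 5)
Your proposal is correct and follows essentially the same route as the paper: both reduce $dw^2(T)$ to $\sup_{\theta\in[0,\pi/2]}(\cos\theta+\|B\|\sin\theta)^2\bigl(\cos^2\theta+(\cos\theta+\|B\|\sin\theta)^2\bigr)$ (the paper via Cauchy--Schwarz for the upper bound together with an explicit approximating sequence $z_n$ built from $By_n$ for attainment, you via exact optimization over the direction of $x$, which singles out the same extremal configuration $x\parallel By$), and both then locate the maximizer $\theta_0$ through a cubic in $\tan\theta$ solved by Cardano. In fact you supply the derivation of the critical-point cubic $t^3+pt^2+qt+r=0$ that the paper merely asserts, and the remaining checks you flag are routine: $s>0$ always, so the depressed cubic has a unique real root, which is positive and gives the interior maximum since $g'(0)>0>g'(\pi/2)$.
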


\begin{proof}
The proof for the case $B=0$ follows trivially. So we consider  $B\neq 0$.\\
Let  $z=\left(\begin{array}{cc}
	x \\
	y 
	\end{array}\right) \in \mathcal{H}\oplus \mathcal{H} $ be such that $\|z\|=1$, i.e., $\|x\|^2+\|y\|^2=1$.
	Then $ \langle Tz,z \rangle = \langle x+By,x \rangle ~~\mbox{and} ~~ \langle Tz,Tz \rangle = \langle x+By,x+By \rangle. $
	Now, we have
	\begin{eqnarray*}
	|\langle Tz,z \rangle|^2+|\langle Tz,Tz \rangle|^2 &\leq& \|x+By\|^2\|x\|^2 +\|x+By\|^4 \\
	&=& \|x+By\|^2 \left (\|x\|^2 +\|x+By\|^2 \right)\\
	&\leq& \sup_{\|x\|^2+\|y\|^2=1}(\|x\|+\|B\|\|y\|)^2(\|x\|^2+(\|x\|+\|B\|\|y\|)^2)\\
	&=& \sup_{\theta \in [0,\frac{\pi}{2}]}(\cos \theta+\|B\|\sin \theta)^2(\cos^2 \theta+(\cos \theta+\|B\|\sin \theta)^2)\\
	&=& (\cos \theta_0 + \|B\| \sin \theta_0)^2(\cos^2 \theta_0 +(\cos \theta_0 + \|B\|\sin \theta_0)^2),
	\end{eqnarray*}
where $b=\|B\|$, $p=-\frac{2b^2-5}{2b},$ $q =- \frac{2b^2-2}{b^2},$ $r= -\frac{3}{2b},$ $s= \frac{1}{2^43^3b^6}(8b^8+20b^6+45b^4+61b^2+28),$ $\alpha=\frac{1}{27}(2p^3-9pq+27r),$ $\beta =(-\frac{\alpha}{2}+\sqrt{s})^\frac{1}{3},$ $\gamma=(-\frac{\alpha}{2}-\sqrt{s})^\frac{1}{3}$ and $\theta_0 = \tan^{-1}(\beta+\gamma-\frac{p}{3}).$\\
Therefore, taking supremum over all unit vectors $z \in \mathcal{H} \oplus \mathcal{H}$, we get  \\
\[dw(T) \leq (\cos \theta_0 + \|B\| \sin \theta_0)(\cos^2 \theta_0 +(\cos \theta_0 + \|B\|\sin \theta_0)^2)^\frac{1}{2} .\]
 We now show that there exists a sequence $\{z_n\}$ in $\mathcal{H} \oplus \mathcal{H}$ with $\|z_n\|=1$ such that $ \lim_{n \to \infty}(|\langle Tz_n,z_n \rangle|^2+|\langle Tz_n,Tz_n \rangle|^2)^\frac{1}{2}=(\cos \theta_0 + \|B\| \sin \theta_0)(\cos^2 \theta_0 +(\cos \theta_0 + \|B\|\sin \theta_0)^2)^\frac{1}{2}.$
 Since $B\in \mathcal{B}(\mathcal{H})$, there exists a sequence $\{y_n\}$ in $\mathcal{H}$ with $\|y_n\|=1$ such that $\lim_{n \to \infty} \|By_n\| = \|B\|.$
 Let $z^k_{n}=\frac{1}{\sqrt{\|By_n\|^2+k^2}}\left(\begin{array}{cc}
	By_n \\
	ky_n 
	\end{array}\right)$, where $k \geq 0 $. Then
	$|\langle Tz^k_{n},z^k_{n} \rangle|^2+|\langle Tz^k_{n},Tz^k_{n} \rangle|^2  = \frac{(1+k)^2\|By_n\|^4}{(\|By_n\|^2+k^2)^2}\left (1+(1+k)^2\right)$

	$= \left (\frac{\|By_n\|}{\sqrt{\|By_n\|^2+k^2}}+\frac{k\|By_n\|}{\sqrt{\|By_n\|^2+k^2}} \right )^2
	 \left (\frac{\|By_n\|^2}{\|By_n\|^2+k^2}+\left (\frac{\|By_n\|}{\sqrt{\|By_n\|^2+k^2}}+\frac{k\|By_n\|}{\sqrt{\|By_n\|^2+k^2}} \right )^2 \right ).$
We can choose $k_0 \geq 0$ such that  $\frac{\|B\|}{\sqrt{\|B\|^2+k_0^2}} = \cos \theta_0$ and  $ \frac{k_0}{\sqrt{\|B\|^2+k_0^2}}= \sin \theta_0$.
Therefore, if we choose $z_n=\frac{1}{\sqrt{\|By_n\|^2+k_0^2}}\left(\begin{array}{cc}
	By_n \\
	k_0y_n 
	\end{array}\right)$, then $\lim_{n \to \infty}(|\langle Tz_n,z_n \rangle|^2+|\langle Tz_n,Tz_n \rangle|^2)^\frac{1}{2}$ $= \Big (\cos \theta_0 + \|B\| \sin \theta_0 \Big)\Big (\cos^2 \theta_0 +(\cos \theta_0 + \|B\|\sin \theta_0)^2\Big)^\frac{1}{2}.$ 
This completes the proof.
\end{proof}

\begin{example}
Let $B =\left(\begin{array}{cc}
	0 & 2\\
	0 & 0
	\end{array}\right)$ and $ T = \left(\begin{array}{cc}
	I & B\\
	0 & 0
	\end{array}\right).$ Then $b=2$, $ p= -\frac{3}{4},$ $ q=-\frac{3}{2},$ $ r=-\frac{3}{4},$ $s=0.15625,$ $\alpha=-1.15625,$ $\beta=0.991,$ $\gamma=0.5676, $ and $\theta_0=1.0657 $. Therefore from Theorem \ref{31}, we have  $dw(T) = dw \left(\begin{array}{cccc}
	1 & 0 & 0 & 2\\
	0 & 1 & 0 & 0\\
	0 & 0 & 0 & 0\\
	0 & 0 & 0 & 0
	\end{array}\right) = 5.107.$
\end{example}

\begin{theorem}\label{32}
Let $B \in \mathcal{B}(\mathcal{H})$ and  $T =\left(\begin{array}{cc}
	0 & B\\
	0 & 0
	\end{array}\right) \in \mathcal{B}(\mathcal{H} \oplus \mathcal{H})$. Then 
	\[ dw\left(\begin{array}{cc}
	0 & B\\
	0 & 0
	\end{array}\right)=\begin{cases}
		0, &B=0 \\
		\frac{\|B\|}{2\sqrt{1-\|B\|^2}}, & \|B\| < \frac{1}{\sqrt{2}} \\
		\|B\|^2, & \|B\| \geq \frac{1}{\sqrt{2}}.
		\end{cases} \]	
\end{theorem}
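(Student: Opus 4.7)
The plan is to reduce the problem to a two-variable optimization in the scalar quantities $\|y\|$ and $\|By\|$, and then solve this by calculus.

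Take a generic unit vector $z = \begin{pmatrix} x \\ y \end{pmatrix} \in \mathcal{H} \oplus \mathcal{H}$, so $\|x\|^2 + \|y\|^2 = 1$. A direct computation gives $Tz = \begin{pmatrix} By \\ 0 \end{pmatrix}$, hence $\langle Tz, z \rangle = \langle By, x \rangle$ and $\|Tz\|^2 = \|By\|^2$. First I would apply the Cauchy–Schwarz inequality to bound $|\langle By, x\rangle|^2 \le \|By\|^2 \|x\|^2 = \|By\|^2(1-\|y\|^2)$, so that
\begin{equation*}
|\langle Tz,z\rangle|^2 + \|Tz\|^4 \;\le\; \|By\|^2(1-\|y\|^2) + \|By\|^4.
\end{equation*}

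Next, writing $u = \|y\|^2 \in [0,1]$ and $s = \|By\|^2 \in [0, b^2 u]$ where $b = \|B\|$, the quantity $s(1-u) + s^2$ is increasing in $s$, so the supremum is taken at $s = b^2 u$, giving
\begin{equation*}
f(u) \;=\; b^2 u(1-u) + b^4 u^2 \;=\; b^2 u \;-\; b^2(1-b^2)u^2.
\end{equation*}
I would then perform a case split: (i) if $b^2 \ge 1/2$, then $f$ is nondecreasing on $[0,1]$ (it is linear or convex upward), and its maximum $f(1)=b^4$ is attained at $u=1$; (ii) if $b^2 < 1/2$, then $f$ is a downward parabola with vertex $u^* = 1/[2(1-b^2)] < 1$, and direct substitution yields $f(u^*) = b^2/[4(1-b^2)]$. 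Putting these together (and using that at $b=1/\sqrt{2}$ both expressions agree) yields the claimed upper bound for $dw(T)$.

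For the matching lower bound (achievability), pick $y_n \in \mathcal{H}$ with $\|y_n\|=1$ and $\|By_n\| \to \|B\|$. In case (i), the sequence $z_n = \begin{pmatrix} 0 \\ y_n \end{pmatrix}$ gives $|\langle Tz_n,z_n\rangle|^2 + \|Tz_n\|^4 = \|By_n\|^4 \to \|B\|^4$, so $dw(T) \ge \|B\|^2$. In case (ii), set $y_n' = \sqrt{u^*}\, y_n$ and $x_n = \sqrt{1-u^*}\, By_n'/\|By_n'\|$, which are valid once $n$ is large enough that $By_n' \ne 0$, and check $\|x_n\|^2 + \|y_n'\|^2 = 1$. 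A direct computation gives $|\langle By_n', x_n\rangle|^2 + \|By_n'\|^4 \to u^*(1-u^*)b^2 + (u^*)^2 b^4 = f(u^*) = b^2/[4(1-b^2)]$, matching the upper bound.

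The only real obstacle is keeping the case split (the cutoff at $\|B\|=1/\sqrt{2}$) straight and making sure the approximating sequences actually lie in $\mathcal{H}\oplus \mathcal{H}$ rather than requiring norm attainment of $B$; this is handled by passing to limits along $\{y_n\}$ as above. The case $B=0$ is trivial since then $T=0$ and $dw(T)=0$.
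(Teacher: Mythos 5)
Your proposal is correct and follows essentially the same route as the paper: Cauchy--Schwarz reduces everything to maximizing $\|B\|^2\|y\|^2\|x\|^2+\|B\|^4\|y\|^4$ over $\|x\|^2+\|y\|^2=1$ (the paper parametrizes by $\|x\|=\cos\theta$, $\|y\|=\sin\theta$ rather than your $u=\|y\|^2$), with the same case split at $\|B\|=1/\sqrt{2}$ and the same kind of approximating sequences built from $y_n$ with $\|By_n\|\to\|B\|$. One tiny slip: for $\tfrac{1}{2}\le\|B\|^2<1$ your $f$ is still a downward parabola, not ``linear or convex upward'', but it is nondecreasing on $[0,1]$ because its vertex sits at $u^*\ge 1$, so the conclusion is unaffected.
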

\begin{proof}
The proof for the case $B=0$ follows trivially. So we consider  $B\neq 0$.\\
 Let  $z=\left(\begin{array}{cc}
	x \\
	y 
	\end{array}\right) \in \mathcal{H}\oplus \mathcal{H} $ be such that $\|z\|=1$, i.e., $\|x\|^2+\|y\|^2=1$.
Then $ \langle Tz,z \rangle = \langle By,x \rangle ~~\mbox{and} ~~ \langle Tz,Tz \rangle = \langle By,By \rangle.$ Now we have
\begin{eqnarray*}
	|\langle Tz,z \rangle|^2+|\langle Tz,Tz \rangle|^2 &\leq& \|By\|^2\|x\|^2 +\|By\|^4 \\
	&\leq & \sup_{\|x\|^2+\|y\|^2=1} \left (\|B\|^2\|y\|^2\|x\|^2 + \|B\|^4\|y\|^4 \right)\\
	&=& \sup_{\theta \in [0,\frac{\pi}{2}]} \|B\|^2 \sin^2 \theta \left ( \cos^2 \theta +\|B\|^2 \sin^2 \theta \right )\\ 
	\end{eqnarray*}
	First we assume that  $ 0 < \|B\| < \frac{1}{\sqrt{2}}.$
	Then  $$\sup_{\theta \in [0,\frac{\pi}{2}]} \|B\|^2 \sin^2 \theta \left ( \cos^2 \theta +\|B\|^2 \sin^2 \theta \right )=\frac{\|B\|^2}{4(1-\|B\|^2)}.$$
Therefore, $dw(T) \leq \frac{\|B\|}{2\sqrt{(1-\|B\|^2)}}.$ We show that there exist a sequence $\{z_n\}$ in $\mathcal{H} \oplus \mathcal{H}$ with $\|z_n\|=1$ such that 
$$ \lim_{n \to \infty}\{|\langle Tz_n,z_n \rangle|^2+|\langle Tz_n,Tz_n \rangle|^2\}^\frac{1}{2}=
	\frac{\|B\|}{2\sqrt{(1-\|B\|^2)}}  .$$

\noindent Since $B\in \mathcal{B}(\mathcal{H})$, there exist a sequence $\{y_n \}$ in $\mathcal{H}$ with $\|y_n\|=1$ such that $\lim_{n \to \infty} \|By_n\| = \|B\|.$
Let $z_{n}=\frac{1}{\sqrt{\|By_n\|^2+k^2}}\left(\begin{array}{cc}
	By_n \\
	ky_n 
	\end{array}\right)$, where $k =  \frac{\|B\|}{\sqrt{1-2\|B\|^2}}$. Then
	\begin{eqnarray*}
	\lim_{n\rightarrow \infty} \{|\langle Tz_{n},z_{n} \rangle|^2+|\langle Tz_{n},Tz_{n} \rangle|^2\} ^{\frac{1}{2}}&=& \frac{\|B\|}{2\sqrt{1-\|B\|^2}}.
	\end{eqnarray*}
Therefore $dw(T)=\frac{\|B\|}{2\sqrt{(1-\|B\|^2)}}.$\\
Next we  consider the case  $\|B\| \geq \frac{1}{\sqrt{2}}$. Then $$\sup_{\theta \in [0,\frac{\pi}{2}]} \|B\|^2 \sin^2 \theta \left ( \cos^2 \theta +\|B\|^2 \sin^2 \theta \right )=\|B\|^4.$$
Therefore, $dw(T) \leq \|B\|^2. $ Now we show that there exist a sequence $\{z_n\}$ in $\mathcal{H} \oplus \mathcal{H}$ with $\|z_n\|=1$ such that 
$$ \lim_{n \to \infty}(|\langle Tz_n,z_n \rangle|^2+|\langle Tz_n,Tz_n \rangle|^2)^\frac{1}{2}=
	\|B\|^2 .$$
\noindent Since $B\in \mathcal{B}(\mathcal{H})$, there exist a sequence $\{y_n \}$ in $\mathcal{H}$ with $\|y_n\|=1$ such that $\lim_{n \to \infty} \|By_n\| = \|B\|.$	
 If we consider $z_n=\left(\begin{array}{cc}
	0 \\
	y_n 
	\end{array}\right)$, then $ \langle Tz_n,z_n \rangle = 0 $ and $ \langle Tz_n,Tz_n \rangle = \|By_n\|^2.$ Then $\lim_{n \to \infty}(|\langle Tz_n,z_n \rangle|^2+|\langle Tz_n,Tz_n \rangle|^2)^\frac{1}{2}=\|B\|^2.$ This completes the proof.
		\end{proof}

\begin{example}
Consider $B =\left(\begin{array}{cc}
	0 & 1\\
	0 & 1
	\end{array}\right)$ then $\|B\|=\sqrt{2}$. Then from Theorem \ref{32}, we have $dw\left(\begin{array}{cccc}
	0 & 0 & 0 & 1\\
	0 & 0 & 0 & 1\\
	0 & 0 & 0 & 0\\
	0 & 0 & 0 & 0
	\end{array}\right)= dw\left(\begin{array}{cc}
	0 & B\\
	0 & 0
	\end{array}\right) =\|B\|^2=2.$
\noindent Again if we consider $B =\left(\begin{array}{cc}
	0.3 & 0.4\\
	0 & 0.5
	\end{array}\right)$ then $\|B\|=0.671$. Then from Theorem \ref{32}, we have $dw\left(\begin{array}{cccc}
	0 & 0 & 0.3 & 0.4\\
	0 & 0 & 0 & 0.5\\
	0 & 0 & 0 & 0\\
	0 & 0 & 0 & 0
	\end{array}\right)= dw\left(\begin{array}{cc}
	0 & B\\
	0 & 0
	\end{array}\right)= \frac{\|B\|}{2\sqrt{1-\|B\|^2}}=0.452.$
	
\end{example}

Next, using Proposition \ref{eql}, we prove the following lower bound.

\begin{theorem}\label{upper29}
Let $A,B \in \mathcal{B}(\mathcal{H}) $. Then
\[\frac{1}{2} \Big ( \max \left \{ dw(A+B),dw(A-B) \right \} - \|A^*B+B^*A\| \Big ) \leq dw\left(\begin{array}{cc}
	0 & A\\
	B & 0
	\end{array}\right). \]
\end{theorem}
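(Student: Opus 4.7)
The plan is to sandwich $T:=\begin{pmatrix} 0 & A \\ B & 0 \end{pmatrix}$ between a cleverly chosen companion matrix and the sum-inequality of Theorem \ref{lem24}. Set $T_1:=\begin{pmatrix} 0 & B \\ A & 0 \end{pmatrix}$; by Lemma \ref{lem23}(b), $dw(T_1)=dw(T)$, and $dw(-T_1)=dw(T_1)$ follows immediately from the definition of $dw$. The point of this choice is that
\[ T+T_1 = \begin{pmatrix} 0 & A+B \\ A+B & 0 \end{pmatrix}, \qquad T-T_1 = \begin{pmatrix} 0 & A-B \\ -(A-B) & 0 \end{pmatrix} \]
fall exactly within the scope of Proposition \ref{eql} (applied to $A+B$ with $\theta=0$ and to $A-B$ with $\theta=\pi$), giving $dw(T+T_1)=dw(A+B)$ and $dw(T-T_1)=dw(A-B)$.

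A direct block multiplication yields
\[ T^*T_1+T_1^*T = \begin{pmatrix} A^*B+B^*A & 0 \\ 0 & A^*B+B^*A \end{pmatrix}, \]
and the analogous expression for the pair $(T,-T_1)$ is the negative of this matrix. Since $A^*B+B^*A$ is self-adjoint its numerical radius equals $\|A^*B+B^*A\|$, and the numerical radius of the above block-diagonal operator equals $w(A^*B+B^*A)$ by a one-line supremum computation; hence $w\bigl(T^*(\pm T_1)+(\pm T_1)^*T\bigr)=\|A^*B+B^*A\|$ in both cases.

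Applying Theorem \ref{lem24} to the decompositions $T+T_1$ and $T+(-T_1)$ and using $dw(\pm T_1)=dw(T)$ therefore gives
\[ dw(A\pm B)\leq 2\,dw(T)+\|A^*B+B^*A\|. \]
Taking the maximum over the two signs and rearranging produces the claimed inequality. The only substantive step is recognizing the right companion matrix $T_1$, chosen so that Lemma \ref{lem23}(b) preserves the Davis--Wielandt radius while Proposition \ref{eql} simultaneously controls both $T+T_1$ and $T-T_1$; the remaining computations are routine block bookkeeping.
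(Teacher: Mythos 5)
Your proposal is correct and follows essentially the same route as the paper: decompose $\bigl(\begin{smallmatrix} 0 & A\pm B \\ \pm(A\pm B) & 0 \end{smallmatrix}\bigr)$ as $T\pm T_1$ with $T_1=\bigl(\begin{smallmatrix} 0 & B \\ A & 0 \end{smallmatrix}\bigr)$, apply Theorem \ref{lem24} together with Proposition \ref{eql} and Lemma \ref{lem23}, and compute the self-adjoint block-diagonal cross term. The only cosmetic difference is that the paper handles the $A-B$ case by substituting $-B$ for $B$ and invoking Lemma \ref{lem23}(a), whereas you use $-T_1$ and Proposition \ref{eql} at $\theta=\pi$; these are interchangeable.
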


\begin{proof}
From Proposition \ref{eql}, we get
\begin{eqnarray*}
dw(A+B) &=& dw\left(\begin{array}{cc}
	0 & A+B\\
	A+B & 0
	\end{array}\right)  \\
	&\leq& dw\left(\begin{array}{cc}
	0 & A\\
	B & 0
	\end{array}\right)+dw\left(\begin{array}{cc}
	0 & B\\
	A & 0
	\end{array}\right)+w(A^*B+B^*A),~~\mbox{by Theorem} ~~\ref{lem24} \\
	&=& 2 dw\left(\begin{array}{cc}
	0 & A\\
	B & 0
	\end{array}\right)+\|A^*B+B^*A\|, ~~\mbox{by Lemma} ~~\ref{lem23}~~(b).
\end{eqnarray*}
So, \[ \frac{1}{2}\left (dw(A+B) -\|A^*B+B^*A\| \right ) \leq dw\left(\begin{array}{cc}
	0 & A\\
	B & 0
	\end{array}\right).\]
Replacing $B$ by $-B$, we get
\begin{eqnarray*}
\frac{1}{2}\left (dw(A-B) -\|A^*B+B^*A\| \right ) &\leq& dw\left(\begin{array}{cc}
	0 & A\\
	-B & 0
	\end{array}\right) \\
	&=& dw\left(\begin{array}{cc}
	0 & A\\
	B & 0
	\end{array}\right), ~~\mbox{by Lemma} ~~\ref{lem23}~~(a).
	\end{eqnarray*} 
	Combining the above two inequalities, we get the desired inequality.
\end{proof}

\begin{remark}
Here we would like to remark that there exist some operators for which the lower bound obtained in Theorem \ref{upper29} is sharper than the lower bound obtained in Theorem \ref{th-lower1}. As for example, if we consider $A=\left(\begin{array}{cc}
	1 & 0\\
	0 & 0
	\end{array}\right), B=\left(\begin{array}{cc}
	{\rm i} & 0\\
	0 & 0
	\end{array}\right)$ and $ T =  \left(\begin{array}{cc}
	0 & A\\
	B & 0
	\end{array}\right) $ then we see that Theorem \ref{upper29} gives $dw(T)\geq \frac{\sqrt{6}}{2}$, whereas Theorem \ref{th-lower1} gives $dw(T)\geq 1.$
\end{remark}

The following corollary immediately follows from Theorem \ref{upper29}.

\begin{cor}\label{cor30}
Let $A,B \in \mathcal{B}(\mathcal{H})$ be such that $A^*B+B^*A=0$. Then \[\frac{1}{2} \max \left \{ dw(A+B),dw(A-B) \right \} \leq dw\left(\begin{array}{cc}
	0 & A\\
	B & 0
	\end{array}\right). \]
	\end{cor}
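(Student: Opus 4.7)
The plan is to derive this corollary as a direct specialization of Theorem \ref{upper29}. Since the hypothesis $A^*B+B^*A=0$ forces $\|A^*B+B^*A\|=0$, the term that was subtracted off in the lower bound of Theorem \ref{upper29} simply vanishes, and the inequality
\[\frac{1}{2}\Big(\max\{dw(A+B),dw(A-B)\}-\|A^*B+B^*A\|\Big)\leq dw\!\left(\begin{array}{cc} 0 & A\\ B & 0\end{array}\right)\]
collapses to exactly the claimed bound.

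Concretely, I would first invoke Theorem \ref{upper29} for the given pair $A,B$, then substitute the operator identity $A^*B+B^*A=0$ (so in particular $\|A^*B+B^*A\|=0$), and read off the resulting inequality. No further estimation, no choice of test vectors, and no appeal to unitary similarity (Lemma \ref{lem22}) or the other structural lemmas (Lemma \ref{lem23}, Proposition \ref{eql}) is needed, since all of that work has already been absorbed into the statement of Theorem \ref{upper29}.

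There is really no technical obstacle here; the only thing to double-check is that the hypothesis is used in the stronger form $A^*B+B^*A=0$ (the operator being zero) rather than merely $w(A^*B+B^*A)=0$ or $\|A^*B+B^*A\|=0$ as a numerical identity — these are of course equivalent, but it is worth pointing out explicitly that the zero operator has zero norm so the subtraction disappears. Because the corollary is a one-line consequence, the proof should be written in a single sentence: \emph{``The inequality follows immediately from Theorem \ref{upper29}, since $A^*B+B^*A=0$ implies $\|A^*B+B^*A\|=0$.''}
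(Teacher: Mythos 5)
Your proposal is correct and matches the paper exactly: the paper states that the corollary ``immediately follows from Theorem \ref{upper29},'' which is precisely your one-line substitution of $\|A^*B+B^*A\|=0$ into that theorem's lower bound. Nothing further is needed.
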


Now, using Corollary \ref{prop25} and Theorem \ref{32}, we prove the following inequality.

\begin{theorem}\label{upper26}
Let $A,B \in \mathcal{B}(\mathcal{H})$, then
\[ dw\left(\begin{array}{cc}
	0 & A\\
	B & 0
	\end{array}\right) \leq \begin{cases}
		\frac{\|A\|}{2\sqrt{1-\|A\|^2}} + \frac{\|B\|}{2\sqrt{1-\|B\|^2}}, & \|A\| < \frac{1}{\sqrt{2}}, \|B\| < \frac{1}{\sqrt{2}} \\
		\frac{\|A\|}{2\sqrt{1-\|A\|^2}} + \|B\|^2, & \|A\| < \frac{1}{\sqrt{2}}, \|B\| \geq \frac{1}{\sqrt{2}} \\
		\|A\|^2 + \frac{\|B\|}{2\sqrt{1-\|B\|^2}}, &  \|A\| \geq \frac{1}{\sqrt{2}}, \|B\| < \frac{1}{\sqrt{2}}\\
		\|A\|^2 + \|B\|^2, & \|A\| \geq \frac{1}{\sqrt{2}}, \|B\| \geq \frac{1}{\sqrt{2}}.
		\end{cases}\]
\end{theorem}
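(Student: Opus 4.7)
The plan is to decompose the block matrix as a sum of two simpler block matrices and exploit Corollary \ref{prop25} together with the exact formula from Theorem \ref{32}. Specifically, I would write
\[
\begin{pmatrix} 0 & A \\ B & 0 \end{pmatrix} = S + T, \qquad S = \begin{pmatrix} 0 & A \\ 0 & 0 \end{pmatrix}, \quad T = \begin{pmatrix} 0 & 0 \\ B & 0 \end{pmatrix}.
\]
A direct block multiplication shows that both $S^*T$ and $T^*S$ are the zero operator on $\mathcal{H}\oplus\mathcal{H}$, so $S^*T+T^*S=0$. Then Corollary \ref{prop25} immediately yields
\[
dw\!\begin{pmatrix} 0 & A \\ B & 0 \end{pmatrix} \;\leq\; dw(S)+dw(T).
\]

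The second step is to evaluate $dw(S)$ and $dw(T)$ exactly. For $S$, Theorem \ref{32} applies directly and gives the value $\tfrac{\|A\|}{2\sqrt{1-\|A\|^2}}$ when $\|A\|<\tfrac{1}{\sqrt{2}}$ and $\|A\|^2$ when $\|A\|\geq \tfrac{1}{\sqrt{2}}$. For $T$, I first invoke Lemma \ref{lem23}(b) (with $A$ there equal to $0$) to transpose the blocks, obtaining
\[
dw(T)=dw\!\begin{pmatrix} 0 & 0 \\ B & 0 \end{pmatrix}=dw\!\begin{pmatrix} 0 & B \\ 0 & 0 \end{pmatrix},
\]
so Theorem \ref{32} applies again with $\|B\|$ in place of $\|A\|$, yielding an analogous expression.

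Finally, I would assemble the bound by splitting into the four cases corresponding to whether each of $\|A\|$ and $\|B\|$ lies below or above $\tfrac{1}{\sqrt{2}}$, and in each case add the two exact values of $dw(S)$ and $dw(T)$ supplied by Theorem \ref{32}. This reproduces exactly the four-branch piecewise bound in the statement.

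The most delicate point is not difficult computationally, but requires care: one must verify $S^*T+T^*S=0$ explicitly in block form so that Corollary \ref{prop25} can be applied without the extra $w(S^*T+T^*S)$ term from Theorem \ref{lem24}. Beyond that, the remaining steps are a bookkeeping of cases in Theorem \ref{32}, so no genuine obstacle appears.
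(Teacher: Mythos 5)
Your proposal is correct and follows essentially the same route as the paper: the same decomposition into $\left(\begin{smallmatrix} 0 & A\\ 0 & 0 \end{smallmatrix}\right)+\left(\begin{smallmatrix} 0 & 0\\ B & 0 \end{smallmatrix}\right)$, verification that the cross terms vanish so Corollary \ref{prop25} applies, the block swap via Lemma \ref{lem23}(b), and the four-case evaluation by Theorem \ref{32}. No gaps.
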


\begin{proof}
Clearly $\left(\begin{array}{cc}
	0 & A\\
	0 & 0
	\end{array}\right)^*\left(\begin{array}{cc}
	0 & 0\\
	B & 0
	\end{array}\right)+\left(\begin{array}{cc}
	0 & 0\\
	B & 0
	\end{array}\right)^*\left(\begin{array}{cc}
	0 & A\\
	0 & 0
	\end{array}\right)=\left(\begin{array}{cc}
	0 & 0\\
	0 & 0
	\end{array}\right).$ \\
	 Therefore, using  Corollary \ref{prop25} and  Theorem \ref{32}, we get
	
	\begin{eqnarray*}
	dw\left(\begin{array}{cc}
	0 & A\\
	B & 0
	\end{array}\right) &\leq& dw\left(\begin{array}{cc}
	0 & A\\
	0 & 0
	\end{array}\right)+dw\left(\begin{array}{cc}
	0 & 0\\
	B & 0
	\end{array}\right)\\
	&=& dw\left(\begin{array}{cc}
	0 & A\\
	0 & 0
	\end{array}\right)+dw\left(\begin{array}{cc}
	0 & B\\
	0 & 0
	\end{array}\right),~~\mbox{By Lemma \ref{lem23}~~ (b)}\\
	&=& \begin{cases}
		\frac{\|A\|}{2\sqrt{1-\|A\|^2}} + \frac{\|B\|}{2\sqrt{1-\|B\|^2}}, & \|A\| < \frac{1}{\sqrt{2}}, \|B\| < \frac{1}{\sqrt{2}} \\
		\frac{\|A\|}{2\sqrt{1-\|A\|^2}} + \|B\|^2, & \|A\| < \frac{1}{\sqrt{2}}, \|B\| \geq \frac{1}{\sqrt{2}} \\
		\|A\|^2 + \frac{\|B\|}{2\sqrt{1-\|B\|^2}}, &  \|A\| \geq \frac{1}{\sqrt{2}}, \|B\| < \frac{1}{\sqrt{2}}\\
		\|A\|^2 + \|B\|^2, & \|A\| \geq \frac{1}{\sqrt{2}}, \|B\| \geq \frac{1}{\sqrt{2}}.
	\end{cases}
	\end{eqnarray*}
	\end{proof}

\begin{remark}
1. If $A=0$ or $B=0$ then the inequality in Theorem \ref{upper26} becomes equality.\\

2. The bounds for the Davis-Wielandt radius of operator matrices can be used to obtain bounds for the same of bounded linear operators.  
Consider $ T =  \left(\begin{array}{cccc}
	0 & 0 & 1 & 0\\
	0 & 0 & 0 & 1\\
	1 & 0 & 0 & 0\\
	0 & 1 & 0 & 0
\end{array}\right). $ Then \cite[Th. 2.1]{ZS} gives $dw(T) \leq \sqrt{6}$, whereas looking at $T$ as an operator matrix $ \left(\begin{array}{cc}
0 & A\\
B & 0
\end{array}\right),$ with $A=B=I,$ we get $dw(T) \leq 2.$ 
\end{remark}

Next, we need the following lemma, which can be found in \cite[pp. 75-76]{H}.

\begin{lemma} $($\cite[pp. 75-76]{H}$)$\label{lem5}
	Let $T\in \mathcal{B}(\mathcal{H}).$ Then for all $x\in \mathcal{H}$, we have
	\[|\langle Tx,x\rangle|\leq \langle |T|x,x\rangle^{1/2} \langle |T^*|x,x\rangle^{1/2}.\]
	\end{lemma}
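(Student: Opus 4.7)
The plan is to prove the mixed Schwarz inequality via the polar decomposition of $T$ combined with the ordinary Cauchy--Schwarz inequality on $\mathcal{H}$.

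First I would invoke the polar decomposition $T = U|T|$, where $U \in \mathcal{B}(\mathcal{H})$ is the partial isometry with initial space $\overline{\mathrm{Range}(|T|)}$ and final space $\overline{\mathrm{Range}(T)}$. A short computation with $TT^{*} = U|T|^{2}U^{*}$ and uniqueness of the positive square root then gives the key identity $|T^{*}| = U|T|U^{*}$ (where both sides vanish on $\ker U^{*}$, so no subtlety arises).

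Next I would split the factor $|T|$ as $|T|^{1/2}|T|^{1/2}$ and rewrite the inner product:
\[
\langle Tx, x\rangle \;=\; \langle U|T|^{1/2}|T|^{1/2}x, x\rangle \;=\; \langle |T|^{1/2}x, \;|T|^{1/2}U^{*}x\rangle.
\]
Applying the standard Cauchy--Schwarz inequality in $\mathcal{H}$ to the two vectors $|T|^{1/2}x$ and $|T|^{1/2}U^{*}x$ yields
\[
|\langle Tx, x\rangle| \;\leq\; \||T|^{1/2}x\| \cdot \||T|^{1/2}U^{*}x\| \;=\; \langle |T|x, x\rangle^{1/2} \langle |T|U^{*}x, U^{*}x\rangle^{1/2}.
\]
Finally, I would rewrite $\langle |T|U^{*}x, U^{*}x\rangle = \langle U|T|U^{*}x, x\rangle = \langle |T^{*}|x, x\rangle$ using the identity from the first step, which gives the desired bound.

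The only genuinely delicate point is justifying $|T^{*}| = U|T|U^{*}$ carefully: $U$ is only a partial isometry, not a unitary, so one must check that the square of the right-hand side equals $TT^{*}$ and that the right-hand side is positive. This is routine once one uses the defining relations $U^{*}U|T| = |T|$ and $UU^{*}T = T$ of the polar decomposition, but it is the step most likely to cause confusion if glossed over. All other steps are essentially a single application of Cauchy--Schwarz.
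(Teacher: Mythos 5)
Your proof is correct, and it is essentially the standard argument for the mixed Schwarz inequality: the paper itself gives no proof, citing Halmos, and the polar-decomposition-plus-Cauchy--Schwarz route you describe (including the identity $|T^*| = U|T|U^*$, verified via $(U|T|U^*)^2 = TT^*$ and positivity) is exactly the proof found in that reference. No gaps; your flagged delicate point about $U$ being only a partial isometry is handled correctly by the relations $U^*U|T| = |T|$ and the uniqueness of the positive square root.
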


Using Lemma \ref{lem5}, we obtain the following estimation for the Davis-Wielandt radius of operator matrices of the form $\left(\begin{array}{cc}
	0&A \\
	B&0
	\end{array}\right)$, where $A,B\in \mathcal{B}(\mathcal{H}).$	
	
\begin{theorem}\label{upper11}
Let $A,B\in \mathcal{B}(\mathcal{H}).$	Then 
\[dw^2\left(\begin{array}{cc}
	0&A \\
	B&0
	\end{array}\right)\leq \frac{1}{2}\max\left\{ \||B|^2+|A^*|^2+2|B|^4\|,  \| |A|^2+|B^*|^2+2|A|^4\|    \right\}.\]
	
\end{theorem}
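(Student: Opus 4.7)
The plan is to set $T=\begin{pmatrix} 0 & A \\ B & 0 \end{pmatrix}$ and bound $|\langle Tz,z\rangle|^2+\|Tz\|^4$ for an arbitrary unit vector $z\in\mathcal{H}\oplus\mathcal{H}$ by a quadratic form in $z$ whose operator is block-diagonal, so that its norm is precisely the maximum appearing on the right-hand side. The main ingredients will be the mixed Schwarz inequality (Lemma \ref{lem5}), AM-GM, and the McCarthy inequality with exponent $p=2$ (Lemma \ref{lem7}).

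First I would compute the relevant absolute values of $T$. A direct block multiplication gives
\[
|T|^2 = T^*T = \begin{pmatrix} |B|^2 & 0 \\ 0 & |A|^2 \end{pmatrix}, \qquad |T^*|^2 = TT^* = \begin{pmatrix} |A^*|^2 & 0 \\ 0 & |B^*|^2 \end{pmatrix},
\]
and hence $|T|^4$ is block-diagonal with blocks $|B|^4$ and $|A|^4$. This is the step that makes the final expression split as a max over the two diagonal blocks, since for a positive block-diagonal operator the norm equals the maximum of the norms of the blocks.

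Next, for a unit vector $z\in\mathcal{H}\oplus\mathcal{H}$, I would estimate each summand separately. By Lemma \ref{lem5} applied to $T$, $|\langle Tz,z\rangle|^2\leq \langle |T|z,z\rangle\langle |T^*|z,z\rangle$; by AM-GM this is at most $\tfrac{1}{2}\bigl(\langle|T|z,z\rangle^2+\langle|T^*|z,z\rangle^2\bigr)$; and then by McCarthy (Lemma \ref{lem7}) with $p=2$ applied to the positive operators $|T|$ and $|T^*|$ this is at most $\tfrac{1}{2}\bigl(\langle|T|^2z,z\rangle+\langle|T^*|^2z,z\rangle\bigr)$. For the other summand, $\|Tz\|^4=\langle|T|^2z,z\rangle^2\leq\langle|T|^4z,z\rangle$ by a second application of Lemma \ref{lem7} with $p=2$ to the positive operator $|T|^2$.

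Adding these two bounds yields
\[
|\langle Tz,z\rangle|^2+\|Tz\|^4 \leq \tfrac{1}{2}\bigl\langle\bigl(|T|^2+|T^*|^2+2|T|^4\bigr)z,z\bigr\rangle \leq \tfrac{1}{2}\bigl\||T|^2+|T^*|^2+2|T|^4\bigr\|.
\]
Taking supremum over unit $z$ gives the bound in terms of the operator on the right, and substituting the block-diagonal forms computed above turns $\||T|^2+|T^*|^2+2|T|^4\|$ into the $\max$ of $\||B|^2+|A^*|^2+2|B|^4\|$ and $\||A|^2+|B^*|^2+2|A|^4\|$. The only mild subtlety is checking each application of McCarthy goes in the right direction (it does, since each operator there is positive and $p=2\geq 1$); there is no significant obstacle.
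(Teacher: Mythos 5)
Your proposal is correct and follows essentially the same route as the paper: the mixed Schwarz inequality (Lemma \ref{lem5}), AM--GM, and two applications of McCarthy's inequality (Lemma \ref{lem7}) with $p=2$ reduce everything to the positive operator $\tfrac{1}{2}(|T|^2+|T^*|^2+2|T|^4)$, whose block-diagonal form yields the stated maximum. No gaps.
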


\begin{proof}	
Let $T=\left(\begin{array}{cc}
	0&A \\
	B&0
	\end{array}\right)$ and $x=(x_1,x_2)\in \mathcal{H}\oplus \mathcal{H}$ with $\|x\|=1,$ i.e., $\|x_1\|^2+\|x_2\|^2=1.$ Now using Lemmas \ref{lem5} and \ref{lem7}, we get
	\begin{eqnarray*}
	&& |\langle Tx,x \rangle |^2+\|Tx\|^4\\
	&=& |\langle Tx,x \rangle |^2+ \langle T^*Tx,x \rangle^2\\
	&\leq & \langle |T|x,x \rangle \langle |T^*|x,x \rangle+\langle |T|^2x,x \rangle^2\\
	&\leq & \frac{1}{2}\left (\langle |T|x,x \rangle^2 +\langle |T^*|x,x \rangle^2\right)+\langle |T|^2x,x \rangle^2\\
	&\leq & \frac{1}{2}\left (\langle |T|^2x,x \rangle +\langle |T^*|^2x,x \rangle\right)+\langle |T|^4x,x \rangle\\
	&=& \frac{1}{2} \langle (|T|^2+|T^*|^2+2|T|^4)x,x\rangle\\
	&=& \frac{1}{2} \left \langle  \left(\begin{array}{cc}
	|B|^2+|A^*|^2+2|B|^4&0 \\
	0&|A|^2+|B^*|^2+2|A|^4
	\end{array}\right)x,x   \right \rangle \\
	&=& \frac{1}{2} \left \{\left \langle (|B|^2+|A^*|^2+2|B|^4)x_1,x_1\right \rangle+\left \langle (|A|^2+|B^*|^2+2|A|^4)x_2,x_2\right \rangle \right\}\\
	&\leq & \frac{1}{2} \max\left \{ \||B|^2+|A^*|^2+2|B|^4\|,\| |A|^2+|B^*|^2+2|A|^4\| \right \}.
	\end{eqnarray*}
	Therefore, taking supremum over all unit vectors in $\mathcal{H}\oplus \mathcal{H}$, we get the required inequality.	
\end{proof}

\begin{remark}
1. Consider $ T =  \left(\begin{array}{cccc}
0 & 0 & 1 & 0\\
0 & 0 & 0 & 0\\
0 & 0 & 0 & 0\\
0 & {\rm i} & 0 & 0
\end{array}\right) = \left(\begin{array}{cc}
0 & A\\
B & 0
\end{array}\right), $ where  $A=\left(\begin{array}{cc}
1 & 0\\
0 & 0
\end{array}\right)$ and $B=\left(\begin{array}{cc}
0 & 0\\
0 & {\rm i}
\end{array}\right).$ Then Theorem \ref{upper11} gives $dw^2(T) \leq
 \frac{3}{2}$, whereas the inequality in Theorem  \ref{upper15} (i) gives  $dw^2(T) \leq 2.$
 This shows that estimation of bounds for  the Davis-Wielandt radius of a bounded linear operator as an operator matrix is a better one. \\

2. Here we would like to note that the inequalities in Theorem \ref{upper26} and Theorem \ref{upper11} are not comparable, in general and this incomparability follows from by considering two matrices  $\left(\begin{array}{cc}
0 & 1\\
1 & 0
\end{array}\right)$ and $\left(\begin{array}{cc}
0 & 0\\
1 & 0
\end{array}\right)$.
\end{remark}

Finally, we compute the exact value for the Davis-Wielandt radius of $\left(\begin{array}{cc}
0 & A\\
B & 0
\end{array}\right)$, under  certain conditions.

\begin{theorem}\label{off}
	Let $A,B $ be two non-zero bounded linear operators on $\mathcal{H}$. 
	Then an upper bound for the Davis-Wielandt radius of  $T=\left(\begin{array}{cc}
	0 & A\\
	B & 0
	\end{array}\right)$ is given by 
	\begin{equation*}
dw^2(T) \leq\begin{cases}
		\frac{\left(\|A\|+\|B\| \right )^2+4\|A\|^2\|B\|^2}{4 \left\{1-(\|A\|-\|B\|)^2 \right\}},    & 
		\frac{-1}{2 \|B\|^2} < \frac{\|A\|-\|B\|}{\|A\| + \|B\|} < \frac{1}{2\|A\|^2}\\
	\max \left\{ \|A\|^4, \|B\|^4 \right \},             & \mbox{otherwise}.
	\end{cases}
	\end{equation*}
Moreover, the upper bound is attained if  there exists $~u_0,v_0\in \mathcal{H}$ such that $Au_0=\|A\|u_0~,~Bv_0=\|B\|v_0$ and $u_0=\lambda v_0$ for some $\lambda \in \mathbb{R}.$
\end{theorem}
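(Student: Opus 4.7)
The approach is to reduce the Davis-Wielandt radius of $T$ to the supremum of an explicit one-variable quadratic on $[0,1]$ by a Cauchy-Schwarz argument, and then carry out elementary calculus on that quadratic; the eigenvector hypothesis will, in turn, convert every inequality into an equality.

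I would first take $z=\bigl(\begin{smallmatrix} x\\ y\end{smallmatrix}\bigr)\in\mathcal{H}\oplus\mathcal{H}$ with $\|x\|^2+\|y\|^2=1$. Direct computation gives $\langle Tz,z\rangle=\langle Ay,x\rangle+\langle Bx,y\rangle$ and $\|Tz\|^2=\|Ay\|^2+\|Bx\|^2$. Applying the triangle inequality together with Cauchy-Schwarz to the first expression, and the bounds $\|Ay\|\leq\|A\|\|y\|$, $\|Bx\|\leq\|B\|\|x\|$ to the second, yields
\[|\langle Tz,z\rangle|^2+\|Tz\|^4\leq(\|A\|+\|B\|)^2\|x\|^2\|y\|^2+\bigl(\|A\|^2\|y\|^2+\|B\|^2\|x\|^2\bigr)^2.\]
Writing $a=\|A\|$, $b=\|B\|$ and $s=\|y\|^2\in[0,1]$, the right-hand side becomes
\[f(s)=(a+b)^2 s(1-s)+\bigl(b^2+(a^2-b^2)s\bigr)^2,\]
so the task reduces to maximizing $f$ on $[0,1]$.

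The polynomial $f$ is quadratic with constant second derivative $f''=2(a+b)^2\bigl[(a-b)^2-1\bigr]$ and endpoint values $f(0)=b^4$, $f(1)=a^4$. A short calculation produces $f'(0)=(a+b)\bigl[(a+b)+2(a-b)b^2\bigr]$ and $f'(1)=(a+b)\bigl[-(a+b)+2(a-b)a^2\bigr]$, so the hypothesis $-\tfrac{1}{2b^2}<\tfrac{a-b}{a+b}<\tfrac{1}{2a^2}$ translates exactly into $f'(0)>0>f'(1)$. Since $f'$ is affine, this in particular forces $f''<0$, so $f$ is concave and its unique critical point
\[s^{*}=\tfrac12+\frac{(a-b)(a^2+b^2)}{2(a+b)\bigl(1-(a-b)^2\bigr)}\in(0,1)\]
is a global maximum. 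Plugging $s^{*}$ back into $f$ and using the identity
\[(a+b)^2\bigl(1-(a-b)^2\bigr)+(a^2+b^2)^2=(a+b)^2+4a^2 b^2\]
(which follows at once from $(a^2+b^2)^2-(a^2-b^2)^2=4a^2b^2$) collapses the expression to $f(s^{*})=\dfrac{(a+b)^2+4a^2b^2}{4(1-(a-b)^2)}$. If the hypothesis fails, then $f$ is either monotone on $[0,1]$ (concave subcase) or convex on $[0,1]$; in both scenarios the maximum is attained at an endpoint and equals $\max\{a^4,b^4\}$. This yields the stated upper bound in both branches.

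For the sharpness statement, after normalization one may assume $u_0=v_0$ is a unit vector satisfying $Au_0=\|A\|u_0$ and $Bu_0=\|B\|u_0$. Setting $z=\bigl(\begin{smallmatrix} cu_0\\ du_0\end{smallmatrix}\bigr)$ with $c,d\geq 0$ and $c^2+d^2=1$ turns every inequality of the opening paragraph into an equality: $Ay=d\|A\|u_0$ is parallel to $x=cu_0$, $Bx=c\|B\|u_0$ is parallel to $y=du_0$, and the two inner products $\langle Ay,x\rangle=cd\|A\|$ and $\langle Bx,y\rangle=cd\|B\|$ are non-negative reals, so Cauchy-Schwarz, the triangle inequality and the operator-norm estimates are simultaneously tight. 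Hence $|\langle Tz,z\rangle|^2+\|Tz\|^4=f(d^2)$, and choosing $d^2$ equal to the maximizer of $f$ on $[0,1]$ (either the interior point $s^{*}$ or the relevant endpoint) realizes the announced upper bound as an actual value in the Davis-Wielandt shell, matching the upper estimate and forcing equality. The main obstacle will be the closed-form simplification of $f(s^{*})$ and the careful partition of parameter space into the interior-maximum and endpoint-maximum sub-cases so that it matches the stated conditions; once that book-keeping is handled, the attainment step is a short verification.
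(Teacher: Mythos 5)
Your proposal follows essentially the same route as the paper: the same Cauchy--Schwarz/triangle-inequality reduction to maximizing $(\|A\|+\|B\|)^2\|x\|^2\|y\|^2+(\|A\|^2\|y\|^2+\|B\|^2\|x\|^2)^2$ over $\|x\|^2+\|y\|^2=1$, followed by one-variable calculus and the same eigenvector construction for attainment. The only difference is that you parametrize by $s=\|y\|^2$ and work with a quadratic instead of the paper's trigonometric form in $\theta$, and you carry out explicitly the "elementary calculus" step that the paper leaves to the reader; your computations of $f'(0),f'(1),s^*$ and $f(s^*)$ check out.
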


\begin{proof} We note that if  $\frac{-1}{2 \|B\|^2} < \frac{\|A\|-\|B\|}{\|A\| + \|B\|} < \frac{1}{2\|A\|^2} $ then  $ (\|A\|-\|B\|)^2 -1 < 0.$ Let   
	\begin{eqnarray*}
	 f(\|A\|, \|B\|)   =    
	\begin{cases}
	\frac{\left(\|A\|+\|B\| \right )^2+4\|A\|^2\|B\|^2}{4 \left\{1-(\|A\|-\|B\|)^2 \right\}},    & 
	\frac{-1}{2 \|B\|^2} < \frac{\|A\|-\|B\|}{\|A\| + \|B\|} < \frac{1}{2\|A\|^2}\\
	\max \left\{ \|A\|^4, \|B\|^4 \right \} ,            & \mbox{otherwise}.
	\end{cases}
	\end{eqnarray*}

	We first show that $	f(\|A\|, \|B\|)$ is an upper bound for $ dw^2(T). $
	Let us choose $z=\left(\begin{array}{c}
	u\\
	v
	\end{array}\right) \in \mathcal{H}\oplus \mathcal{H}$ such that $\|z\|^2=\|u\|^2 +\|v\|^2=1.$ Consider $\|u\|=\cos\theta$ and $\|v\|=\sin\theta$. Then, we have  
	\begin{eqnarray*}
		|\langle Tz,z \rangle|^2+\|Tz\|^4 &=&|\langle Av,u \rangle + \langle Bu,v \rangle|^2 + (\|Av\|^2+\|Bu\|^2)^2 \\
		&\leq&  (\|A\|\|v\|\|u\| + \|B\|\|u\|\|v\|)^2 + (\|A\|^2\|v\|^2+\|B\|^2\|u\|^2)^2 \\
		&=& (\|A\|+\|B\|)^2 \|u\|^2 \|v\|^2+(\|A\|^2\|v\|^2 +\|B\|^2\|u\|^2)^2 \\
		&=& (\|A\|+\|B\|)^2 \cos^2  \theta \sin^2 \theta +(\|A\|^2\sin^2 \theta+\|B\|^2\cos^2 \theta)^2 \\
		&=& \frac{1}{8} (\alpha \cos 4\theta + \beta\cos 2\theta + \gamma),
	\end{eqnarray*}
	where 
	$\alpha=(\|B\|+\|A\|)^2 \Big((\|B\|-\|A\|)^2-1\Big)$,
	$\beta=4(\|B\|^4-\|A\|^4)$,
	$\gamma=(\|B\|^2-\|A\|^2)^2+2(\|B\|^2+\|A\|^2)^2+(\|B\|+\|A\|)^2.$ Considering $ g(\theta)=  \frac{1}{8} (\alpha \cos 4\theta + \beta\cos 2\theta + \gamma) $ and using elementary calculus we can  
	show  that $f(\|A\|,\|B\|)$ is an upper bound for $dw^2(T)$.
If, in addition,  there exists $~u_0,v_0\in \mathcal{H}$ such that $Au_0=\|A\|u_0~,~Bv_0=\|B\|v_0$ and $u_0=\lambda v_0$ for some $\lambda \in \mathbb{R}$, then  it is easy to see that the upper bound for $dw^2(T)$ is attained at $z_0=\frac{1}{\sqrt{\|u_0\|^2+\|v_0\|^2}}\left(\begin{array}{c}
	v_0\\
	u_0
	\end{array}\right)$.
	Therefore, $ dw^2(T) = f(\|A\|,\|B\|).$ 
\end{proof}

\begin{remark}
1. If we consider $A=B$ then from Theorem \ref{upper11}  we get  $$dw^2\left(\begin{array}{cc}
	0&A \\
	A&0
	\end{array}\right)\leq \frac{1}{2}\left\{ \||A|^2+|A^*|^2+2|A|^4\| \right\}  $$ whereas from  Theorem \ref{off} we get $$ dw^2\left(\begin{array}{cc}
	0&A \\
	A&0
	\end{array}\right)\leq \|A\|^2+\|A\|^4.$$ 
	Clearly $  \frac{1}{2}\left\{ \||A|^2+|A^*|^2+2|A|^4\| \right\}   \leq  \|A\|^2+\|A\|^4 $ so that Theorem \ref{upper11} always gives a better bound for $ A=B.$ On the other hand, if we consider $A=2I, B=I,$ then Theorem \ref{upper11} gives $dw^2\left(\begin{array}{cc}
	0&A \\
	B&0
	\end{array}\right)\leq \frac{37}{2} $, whereas Theorem \ref{off} gives $dw^2\left(\begin{array}{cc}
	0&A \\
	B&0
	\end{array}\right)\leq  16$. This shows that the bounds obtained in Theorem \ref{upper11} and Theorem \ref{off}  are not comparable in general.

\end{remark}

 The final results of this section is  to  obtain upper bounds for the Davis-Wielandt radius of operator matrices of the form $\left(\begin{array}{cc}
	A & B\\
	0 & C
	\end{array}\right)$, where $A,B,C \in \mathcal{B}(\mathcal{H}).$

\begin{theorem}\label{35}
Let $A,B,C \in \mathcal{B}(\mathcal{H}).$ Then
$$dw^2\left(\begin{array}{cc}
	A & B\\
	0 & C
	\end{array}\right) \leq \frac{9}{4}\max \Big \{\|A\|^2,\|B\|^2,\|C\|^2 \Big \}+\frac{14+6\sqrt{5}}{4}\max \Big\{\|A\|^4,\|B\|^4,\|C\|^4 \Big\}.$$
\end{theorem}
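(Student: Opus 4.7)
The plan is to fix a unit vector $z = \bigl(\begin{smallmatrix} x\\ y\end{smallmatrix}\bigr) \in \mathcal{H} \oplus \mathcal{H}$ with $\|x\|^2 + \|y\|^2 = 1$, parametrize $\|x\| = \cos\theta$ and $\|y\| = \sin\theta$ for $\theta \in [0,\pi/2]$, and bound $|\langle Tz,z\rangle|^2$ and $\|Tz\|^4$ separately by scalar trigonometric expressions that we can maximize in closed form. Setting $M = \max\{\|A\|,\|B\|,\|C\|\}$, and computing
\[
\langle Tz,z\rangle = \langle Ax,x\rangle + \langle By,x\rangle + \langle Cy,y\rangle, \qquad \|Tz\|^2 = \|Ax+By\|^2 + \|Cy\|^2,
\]
I would apply Cauchy--Schwarz and the triangle inequality to get
\[
|\langle Tz,z\rangle| \leq M\bigl(\cos^2\theta + \cos\theta\sin\theta + \sin^2\theta\bigr) = M\bigl(1 + \tfrac{1}{2}\sin 2\theta\bigr)
\]
and
\[
\|Tz\|^2 \leq M^2\bigl[(\cos\theta + \sin\theta)^2 + \sin^2\theta\bigr] = M^2\bigl(\tfrac{3}{2} + \sin 2\theta - \tfrac{1}{2}\cos 2\theta\bigr).
\]

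Next I would maximize the two trigonometric factors. The first attains maximum value $\tfrac{3}{2}$ at $2\theta = \pi/2$, so $|\langle Tz,z\rangle|^2 \leq \tfrac{9}{4}M^2$. The second is of the form $a + b\sin 2\theta + c\cos 2\theta$ with $a = 3/2$, $b = 1$, $c = -1/2$, so its maximum over $\theta$ is $a + \sqrt{b^2 + c^2} = \tfrac{3}{2} + \tfrac{\sqrt{5}}{2} = \tfrac{3+\sqrt{5}}{2}$. Squaring yields $\|Tz\|^4 \leq M^4 \cdot \bigl(\tfrac{3+\sqrt{5}}{2}\bigr)^2 = \tfrac{14 + 6\sqrt{5}}{4}\, M^4$.

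Finally I would note the elementary identities $M^2 = \max\{\|A\|^2,\|B\|^2,\|C\|^2\}$ and $M^4 = \max\{\|A\|^4,\|B\|^4,\|C\|^4\}$, add the two pointwise bounds on $|\langle Tz,z\rangle|^2 + \|Tz\|^4$, and take the supremum over unit $z$ to obtain the claimed inequality. The only real subtlety is the second maximization: the estimate of $\|Tz\|^2$ mixes $\sin 2\theta$ and $\cos 2\theta$ with unequal coefficients, so one must not stop at the crude bound $3/2 + 1 + 1/2 = 3$ but instead use the sharp phase-shift identity $b\sin\phi + c\cos\phi \leq \sqrt{b^2+c^2}$, which is exactly what produces the coefficient $\tfrac{14+6\sqrt{5}}{4}$ in the statement. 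Since the two sup's are taken separately, the resulting bound is valid (though possibly not sharp), which matches the form of the theorem.
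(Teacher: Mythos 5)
Your proposal is correct and follows essentially the same route as the paper: the same parametrization $\|x\|=\cos\theta$, $\|y\|=\sin\theta$, the same Cauchy--Schwarz/triangle-inequality estimates leading to the factors $\bigl(1+\cos\theta\sin\theta\bigr)^2$ and $\bigl(1+2\cos\theta\sin\theta+\sin^2\theta\bigr)^2$, and the same separate maximizations yielding $\tfrac{9}{4}$ and $\tfrac{14+6\sqrt{5}}{4}$.
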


\begin{proof}
Let $T=\left(\begin{array}{cc}
	A & B\\
	0 & C
	\end{array}\right)$. Let $z=\left(\begin{array}{cc}
	x \\
	y 
	\end{array}\right) \in \mathcal{H}\oplus \mathcal{H} $ be such that $\|z\|=1$.	Then, we have
\begin{eqnarray*}
dw^2(T) &=&\sup_{\|z\|=1} \{|\langle Tz,z \rangle|^2+\|Tz\|^4\}\\
&=& \sup_{\|x\|^2+\|y\|^2=1}\{|\langle Ax+By,x\rangle+\langle Cy,y\rangle|^2+(\|Ax+By\|^2+\|Cy\|^2)^2\}\\
&\leq& \sup_{\|x\|^2+\|y\|^2=1} \{(\|Ax+By\|\|x\|+\|Cy\|\|y\|)^2 + ((\|Ax\|+\|By\|)^2+\|Cy\|^2)^2\}\\
&\leq& \sup_{\|x\|^2+\|y\|^2=1} \Big \{(\|A\|\|x\|^2+\|B\|\|x\|\|y\|+\|C\|\|y\|^2)^2\\
&& + ((\|A\|\|x\|+\|B\|\|y\|)^2+\|C\|^2\|y\|^2)^2 \Big \}\\
&\leq& \sup_{\|x\|^2+\|y\|^2=1}  \Big\{ \max \{\|A\|^2,\|B\|^2,\|C\|^2 \}(\|x\|^2+\|y\|^2+\|x\|\|y\|)^2 \\
&& +\max \{\|A\|^4,\|B\|^4,\|C\|^4 \} (\|x\|^2+\|y\|^2+2\|x\|\|y\|+\|y\|^2)^2 \Big \}\\
&=& \sup_{\theta \in [0,\frac{\pi}{2}]} \Big\{ \max \{\|A\|^2,\|B\|^2,\|C\|^2 \}(1+\cos \theta\sin \theta)^2 \\
&& +\max \{\|A\|^4,\|B\|^4,\|C\|^4 \} (1+2\cos \theta\sin \theta+\sin^2\theta)^2 \Big\}\\
&\leq& \frac{9}{4}\max \{\|A\|^2,\|B\|^2,\|C\|^2 \}+\frac{14+6\sqrt{5}}{4}\max \{\|A\|^4,\|B\|^4,\|C\|^4 \}. 
\end{eqnarray*}

\end{proof}

\begin{remark}
Here we would like to note that there exist some operators for which the bound obtained in Theorem \ref{35} is sharper than the bounds obtained in Theorem \ref{th-upper2} and Theorem \ref{upper15} (i). As for example, we consider $T=\left(\begin{array}{cc}
	1 & 1\\
	0 & 1
	\end{array}\right)$. If looking at $T$ as an operator matrix $ \left(\begin{array}{cc}
A & B\\
0 & C
\end{array}\right),$ with $A=B=C=I,$ then  Theorem \ref{35} gives $dw^2(T) \leq 9.104$, whereas Corollary \ref{upper8} (ii), Theorem \ref{upper15} (i) and Theorem \ref{upper15} (ii)  give $dw^2(T) \leq 9.272$, $dw^2(T) \leq 9.472$ and $dw^2(T) \leq 9.162,$ respectively.
\end{remark}

\begin{theorem}\label{34}
Let $A,B,C \in \mathcal{B}(\mathcal{H}).$ Then
$$dw\left(\begin{array}{cc}
	A & B\\
	0 & C
	\end{array}\right) \leq \max \Big \{dw(A),dw(C) \Big \}+ \|A^*B\|+\begin{cases}
		\frac{\|B\|}{2\sqrt{1-\|B\|^2}}, & \|B\| < \frac{1}{\sqrt{2}} \\
		\|B\|^2, & \|B\| \geq \frac{1}{\sqrt{2}}.
		\end{cases}$$
\end{theorem}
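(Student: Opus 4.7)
The plan is to write the upper triangular operator matrix as a sum of a block diagonal and a strictly upper triangular part, and then invoke the subadditive-type inequality of Theorem \ref{lem24}. Specifically, I would set
\[
S=\left(\begin{array}{cc} A & 0\\ 0 & C \end{array}\right),\qquad R=\left(\begin{array}{cc} 0 & B\\ 0 & 0 \end{array}\right),
\]
so that $\left(\begin{smallmatrix} A & B\\ 0 & C \end{smallmatrix}\right)=S+R$. Theorem \ref{lem24} then yields
\[
dw\!\left(\begin{array}{cc} A & B\\ 0 & C \end{array}\right)\le dw(S)+dw(R)+w(S^*R+R^*S),
\]
and the whole task reduces to evaluating each of the three terms on the right.

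For the first two terms I would quote results already proved in the paper. Lemma \ref{33} gives $dw(S)=\max\{dw(A),dw(C)\}$, which is the first summand in the claimed bound. Theorem \ref{32} applied to $B$ gives the piecewise expression
\[
dw(R)=\begin{cases}\frac{\|B\|}{2\sqrt{1-\|B\|^2}}, & \|B\|<\frac{1}{\sqrt{2}},\\ \|B\|^2, & \|B\|\ge\frac{1}{\sqrt{2}},\end{cases}
\]
which is exactly the piecewise term appearing in the theorem.

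The remaining step is a short block-matrix computation. Direct multiplication gives
\[
S^*R+R^*S=\left(\begin{array}{cc} 0 & A^*B\\ B^*A & 0 \end{array}\right),
\]
and this matrix is self-adjoint, so its numerical radius equals its operator norm. For an off-diagonal block matrix of the form $\left(\begin{smallmatrix} 0 & X\\ X^* & 0 \end{smallmatrix}\right)$ the operator norm is $\|X\|$, so $w(S^*R+R^*S)=\|A^*B\|$. Combining the three pieces produces exactly the right-hand side of the stated inequality.

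I do not expect any real obstacle here; the main point is merely to recognize the right decomposition so that all the pre-established machinery (Theorem \ref{lem24} for the subadditivity, Lemma \ref{33} for the diagonal block, and Theorem \ref{32} for the strictly upper triangular block) can be applied in sequence. The only item requiring care is verifying that the cross term $S^*R+R^*S$ is self-adjoint and identifying $w$ with $\|A^*B\|$; this is a short and standard block calculation.
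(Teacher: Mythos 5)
Your proposal is correct and follows essentially the same route as the paper: the same decomposition into a block-diagonal part plus a strictly upper triangular part, followed by Theorem \ref{lem24}, Lemma \ref{33}, and Theorem \ref{32}, with the cross term evaluated as $w\left(\begin{smallmatrix} 0 & A^*B\\ B^*A & 0 \end{smallmatrix}\right)=\|A^*B\|$. No gaps.
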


\begin{proof}
Using Lemma \ref{lem24}, Lemma  \ref{33} and Theorem \ref{32}, we get
\begin{eqnarray*}
dw\left(\begin{array}{cc}
	A & B\\
	0 & C
	\end{array}\right) &=& dw\left(\left(\begin{array}{cc}
	A & 0\\
	0 & C
	\end{array}\right)+ \left(\begin{array}{cc}
	0 & B\\
	0 & 0
	\end{array}\right)\right)\\
	&\leq& dw\left(\begin{array}{cc}
	A & 0\\
	0 & C
	\end{array}\right)+ dw\left(\begin{array}{cc}
	0 & B\\
	0 & 0
	\end{array}\right) \\
	&& + w\left (\left(\begin{array}{cc}
	A & 0\\
	0 & C
	\end{array}\right)^* \left(\begin{array}{cc}
	0 & B\\
	0 & 0
	\end{array}\right)+ \left(\begin{array}{cc}
	0 & B\\
	0 & 0
	\end{array}\right)^*\left(\begin{array}{cc}
	A & 0\\
	0 & C
	\end{array}\right) \right)\\
	&=& \max \{dw(A),dw(C) \}+dw\left(\begin{array}{cc}
	0 & B\\
	0 & 0
	\end{array}\right)+w\left(\begin{array}{cc}
	0 & A^*B\\
	B^*A & 0
	\end{array}\right)\\
	&=& \max \{dw(A),dw(C) \} + \|A^*B\| +\begin{cases}
		\frac{\|B\|}{2\sqrt{1-\|B\|^2}}, & \|B\| < \frac{1}{\sqrt{2}} \\
		\|B\|^2, & \|B\| \geq \frac{1}{\sqrt{2}}.
		\end{cases}
\end{eqnarray*}
This completes the proof.
\end{proof}

\begin{remark}
We note that if we consider $B=0$ in Theorem \ref{34} then the inequality becomes equality.
\end{remark}

\begin{remark}
The inequalities in Theorem \ref{35} and Theorem \ref{34} are not comparable, in general. If we consider $B=0$ then Theorem \ref{34} gives better bound than that in Theorem \ref{35}. Again if we consider $T=\left(\begin{array}{cc}
	1 & 1\\
	0 & 0
	\end{array}\right)$ then Theorem \ref{35} gives $dw(T)\leq \sqrt{\frac{23+6\sqrt{5}}{4}} =3.017$, whereas Theorem \ref{34} gives $dw(T)\leq 2+\sqrt{2}= 3.414$.
\end{remark}

\textbf{Acknowledgement.}  We would like to thank the referee for his/her valuable suggestions. The comparaibilty of bounds in section 2 was suggested by the referee, he also suggested to compute the bounds for some of $ 2 \times 2$ operator matrices. The present form of the paper is due to him/her.
\bibliographystyle{amsplain}

\end{document}